\begin{document}

 \renewcommand*{\backref}[1]{}
\renewcommand*{\backrefalt}[4]{%
    \ifcase #1 (Not cited.)%
    \or        (p.\,#2)%
    \else      (pp.\,#2)%
    \fi}

\newtheorem{theorem}{Theorem}
\newtheorem{lemma}[theorem]{Lemma}
\newtheorem{claim}[theorem]{Claim}
\newtheorem{cor}[theorem]{Corollary}
\newtheorem{prop}[theorem]{Proposition}
\newtheorem{definition}{Definition}
\newtheorem{question}[theorem]{Question}
\newtheorem{assump}[theorem]{Assumption}

\numberwithin{equation}{section}
\numberwithin{theorem}{section}
\numberwithin{table}{section}

\def\cA{{\mathcal A}}
\def\cB{{\mathcal B}}
\def\cC{{\mathcal C}}
\def\cD{{\mathcal D}}
\def\cE{{\mathcal E}}
\def\cF{{\mathcal F}}
\def\cG{{\mathcal G}}
\def\cH{{\mathcal H}}
\def\cI{{\mathcal I}}
\def\cJ{{\mathcal J}}
\def\cK{{\mathcal K}}
\def\cL{{\mathcal L}}
\def\cM{{\mathcal M}}
\def\cN{{\mathcal N}}
\def\cO{{\mathcal O}}
\def\cP{{\mathcal P}}
\def\cQ{{\mathcal Q}}
\def\cR{{\mathcal R}}
\def\cS{{\mathcal S}}
\def\cT{{\mathcal T}}
\def\cU{{\mathcal U}}
\def\cV{{\mathcal V}}
\def\cW{{\mathcal W}}
\def\cX{{\mathcal X}}
\def\cY{{\mathcal Y}}
\def\cZ{{\mathcal Z}}

\def\A{{\mathbb A}}
\def\B{{\mathbb B}}
\def\C{{\mathbb C}}
\def\D{{\mathbb D}}
\def\E{{\mathbb E}}
\def\F{{\mathbb F}}
\def\G{{\mathbb G}}
\def\I{{\mathbb I}}
\def\J{{\mathbb J}}
\def\K{{\mathbb K}}
\def\L{{\mathbb L}}
\def\M{{\mathbb M}}
\def\N{{\mathbb N}}
\def\O{{\mathbb O}}
\def\P{{\mathbb P}}
\def\Q{{\mathbb Q}}
\def\R{{\mathbb R}}
\def\S{{\mathbb S}}
\def\T{{\mathbb T}}
\def\U{{\mathbb U}}
\def\V{{\mathbb V}}
\def\W{{\mathbb W}}
\def\X{{\mathbb X}}
\def\Y{{\mathbb Y}}
\def\Z{{\mathbb Z}}

\def\RQ{{\mathsf Q}}

\def\ep{{\mathbf{e}}_p}
\def\em{{\mathbf{e}}_m}
\def\eq{{\mathbf{e}}_q}

\def\scr{\scriptstyle}
\def\\{\cr}
\def\({\left(}
\def\){\right)}
\def\[{\left[}
\def\]{\right]}
\def\<{\langle}
\def\>{\rangle}
\def\fl#1{\left\lfloor#1\right\rfloor}
\def\rf#1{\left\lceil#1\right\rceil}
\def\le{\leqslant}
\def\ge{\geqslant}
\def\eps{\varepsilon}
\def\mand{\qquad\mbox{and}\qquad}

\def\sssum{\mathop{\sum\ \sum\ \sum}}
\def\ssum{\mathop{\sum\, \sum}}
\def\ssumw{\mathop{\sum\qquad \sum}}

\def\vec#1{\mathbf{#1}}
\def\inv#1{\overline{#1}}
\def\num#1{\mathrm{num}(#1)}
\def\dist{\mathrm{dist}}

\def\fA{{\mathfrak A}}
\def\fB{{\mathfrak B}}
\def\fC{{\mathfrak C}}
\def\fU{{\mathfrak U}}
\def\fV{{\mathfrak V}}

\newcommand{\bflambda}{{\boldsymbol{\lambda}}}
\newcommand{\bfxi}{{\boldsymbol{\xi}}}
\newcommand{\bfrho}{{\boldsymbol{\rho}}}
\newcommand{\bfnu}{{\boldsymbol{\nu}}}

\def\GL{\mathrm{GL}}
\def\SL{\mathrm{SL}}

\def\Hba{\overline{\cH}_{a,m}}
\def\Hta{\widetilde{\cH}_{a,m}}
\def\Hb1{\overline{\cH}_{m}}
\def\Ht1{\widetilde{\cH}_{m}}

\def\bfell{\boldsymbol{\ell}}
\def\vx{\vec{x}}
\def\vy{\vec{y}}

\def\flp#1{{\left\langle#1\right\rangle}_p}
\def\flm#1{{\left\langle#1\right\rangle}_m}
\def\dmod#1#2{\left\|#1\right\|_{#2}}
\def\dmodq#1{\left\|#1\right\|_q}

\def\Zm{\Z/m\Z}

\def\Err{{\mathbf{E}}}

\newcommand{\commB}[2][]{\todo[#1,color=green!60]{B: #2}}
\newcommand{\commI}[2][]{\todo[#1,color=yellow]{I: #2}}
\newcommand{\commR}[2][]{\todo[#1,color=red]{R: #2}}

\def\ccr#1{\textcolor{red}{#1}}
\def\cco#1{\textcolor{orange}{#1}}
\def\ccc#1{\textcolor{cyan}{#1}}

\def\xxx{\vskip5pt\hrule\vskip5pt}


\title[Bohr Sets Generated by Polynomials]{\bf Bohr Sets Generated by Polynomials and Coppersmith's method in many variables}

 \author[R. Baird] {Riley Baird}
\address{School of Science, University of New South Wales,
Canberra, ACT 2106, Australia}
\email{riley@mailo.com}

 \author[B. Kerr] {Bryce Kerr}

\address{School of Science, University of New South Wales,
Canberra, ACT 2106, Australia}
\email{bryce.kerr@unsw.edu.au}

 \author[I. E. Shparlinski] {Igor E. Shparlinski}

\address{Department of Pure Mathematics, University of New South Wales,
Sydney, NSW 2052, Australia}
\email{igor.shparlinski@unsw.edu.au}

\date{\today}
\pagenumbering{arabic}

\begin{abstract} We obtain bounds on the average size of Bohr sets with
coefficients parametrised by polynomials over finite fields and obtain a series of 
general results and also some sharper results for specific sets which are 
important for applications to computer science. In particular, we use our estimates to show that a heuristic assumption used in the many variable version of Coppersmith's method holds with high probability. We demonstrate the use of our results on the approximate greatest common divisor
problem and obtain a fully rigorous version of the heuristic algorithm of H.~Cohn and N.~Heninger (2013).
\end{abstract}
\keywords{Bohr set, polynomials over finite fields, approximate greatest common divisor}
\subjclass[2020]{11J71, 11L07, 11Y16 N, 68Q25}

\maketitle

\tableofcontents

\section{Introductoion}

\subsection{Bohr sets}

Let $\cG$ be a	commutative	group.	Given $n$ characters $\chi_1, \ldots, \chi_n$
of $\cG$ and $n$ real numbers $\xi_1, \ldots, \xi_n \in  (0, 1/2]$, we define the 
{\it Bohr set\/}
\begin{equation*}
\begin{split}
\fB(\chi_1, & \ldots, \chi_n;\xi_1, \ldots, \xi_n) \\
&= 
\{g \in  \cG:~|\arg \chi_j(g)| \le 2\pi \xi_j, \ j = 1,  \ldots,  n\},
\end{split}
\end{equation*}
where we take the branch of $\arg$ that lies in $[-\pi, \pi)$. 

For $q$ prime let $\F_q$ denote the
finite field of $q$ elements which we assume to be represented 
by the set of integers $\{0, 1, \ldots, q-1\}$. For $\cG = \F_q$ one usually uses the following equivalent 
but more convenient definition of Bohr sets. 

Given vectors 
$$
\vec{a}=(a_1,  \ldots,  a_n) \in \Z^n \mand \vec{h} = (h_1,  \ldots,  h_n) \in \N^n
$$ 
we define the  {\it Bohr set modulo $q$\/} as
$$
\fB_q(\vec{a}, \vec{h}) 
= 
\{s \in  \F_q:~  \|a_js/q\| \le h_j/q, \ j = 1,  \ldots, n\},
$$
where  
$$
\|\zeta \| = \min\{\{\zeta\}, 1-\{\zeta\}\}
$$ 
is the distance between a real number $\zeta$ and its closest integer.

Clearly any Bohr set $\fB_q(\vec{a}, \vec{h})$ contains $u=0$, and 
if $\fB_q(\vec{a}; \vec{h}) = \{0\}$ we say that it is {\it trivial\/}. 
A simple counting argument shows that if $\vec{h}$ is fixed and 
\begin{equation}
 \label{eq:Prod h}
h_1\ldots h_n = o(q^{n-1}),  
\end{equation}
then for  all but at most $o(q^{n})$  vectors 
$\vec{a}  \in \F_q^n$ the Bohr set $\fB_q(\vec{a}; \vec{h}) $ 
is trivial. 

Here we investigate the question of triviality for the parametric family
of Bohr sets  
$$\fB_q(\vec{f};\vec{h};u)=  \fB_q((f_1(u), \ldots, f_n(u)); \vec{h}),
$$
where $f_1 , \ldots, f_n \in \F_q[X]$ are   $n$ linearly independent polynomials.

Our purpose is to investigate various conditions on 
$\vec{h}$ which imply that $\fB_q(\vec{f}; \vec{h};u)$ is
trivial for almost all $u \in \F_q$. Our motivation for this problem comes from the multivariable Coppersmith method and its various applications to encryption and coding theory which we discuss in Section~\ref{sec:appl}.

\subsection{Notation}

We define $\fU_q(\vec{f}; \vec{h})$ as the set 
of $u \in \F_q$ for which $\fB_q(\vec{f}; \vec{h};u)$ is trivial.
We are interested in showing that  
$\# \fU_q(\vec{f}; \vec{h}) = o(q)$ provided that 
essentially the natural condition~\eqref{eq:Prod h} holds
and maybe also some other conditions, which we try to
keep as generous as possible.  
Here we concentrate on the special case of  monomials 
\begin{equation}
\label{eq:monom}
\vec{f}_{\vec{a}, \vec{k}} = \( a_1X^{k_1},  \ldots,  a_nX^{k_n}\),
\end{equation}
with $\vec{a}, \vec{k} \in \Z^n$ such that 
\begin{equation}
\label{eq:vec a k}
a_i \not \equiv 0 \mod  q,\quad  k_i > 0, \quad  k_i \ne k_j, \qquad 
1 \le i, j \le n, \ i \ne j.
\end{equation}
Permuting, if necessary, the order of the monomials in 
$\vec{f}_{\vec{a}, \vec{k}}$, we always assume that 
\begin{equation}
\label{eq:order h}
1\le h_1 \le \ldots \le h_n. 
\end{equation}
We note that the assumptions~\eqref{eq:vec a k} are not necessary for our techniques but the 
final bounds are stronger under these assumptions. 
In this case we use 
$\cU_{n,q}(\vec{a}, \vec{k}, \vec{h})$ 
to denote the set $\fU_q(\vec{f}_{\vec{a}, \vec{k}}; \vec{h})$. Furthermore when 
 $k_j = j$,  we simply write 
 $\cU_{n,q}(\vec{a},\vec{h})$.

Throughout, any implied constants in symbols $O$, $\ll$
and $\gg$ may  depend  on $n$ and  the degrees of 
the polynomials $f_1, \ldots, f_n$ (and occasionally on 
other explicitly specified parameters). 
 We recall
that the notations $A = O(B)$,  $A \ll B$ are equivalent to the statement that $|A| \le c B$ holds 
with some absolute constant $c> 0$.

\subsection{Reformulation and simple estimates}
Here we present some general observations which apply to 
arbitrary polynomials. 

Clearly  $\fU_q(\vec{f}; \vec{h})$ is the set of $u\in \F_q$ 
for which the following system of congruences
\begin{equation}
\label{eq:congrf}
sf_j(u) \equiv x_j \mod  q, \qquad   s \in \F_q^*, \ |x_j|\le h_j, 
\quad j = 1, \ldots, n.
\end{equation}
has a solution.
 
Denoting 
$$
d = \max\{\deg f_1, \ldots, \deg f_n\}, 
$$
we see that~\eqref{eq:congrf} may have $x_1 \ldots x_n = 0$ for at most  
$dn$ values of $u$. For the remaining $u$, we use 
the first congruence in~\eqref{eq:congrf} to eliminate 
$s$ from the system~\eqref{eq:congrf}. This leads us to the inequality
\begin{equation}
\label{eq:U V}
\# \fU_q(\vec{f}; \vec{h}) \le V_p(\vec{f}; \vec{h}) +O(1),
\end{equation}
where $V_q(\vec{f}; \vec{h})$ is the number of solutions of
the following system of congruences
$$
x_n R_j(u) \equiv x_j \mod  q, \qquad  u \in \F_q^*, \ 1 \le |x_j|\le h_j, 
\quad j = 1, \ldots, n,
$$
with rational functions 
$$
R_j(X) =\frac{f_j(X)}{f_1(X)},\qquad j = 1, \ldots, n. 
$$
Certainly the last congruence (with $j=1$)
gives us no useful information. Discarding it, we
see that for each fixed $x_1$ the resulting system of congruences
counts the number of times several rational functions 
fall simultaneously in prescribed intervals modulo $q$.

There is extensive literature which studies questions of this kind 
for  polynomials and rational functions in one or several variables, see~\cite[Theorem~21.4]{IwKow} for a typical result of this type.

Clearly,  since the polynomials $f_1, \ldots, f_n$ are 
linearly independent, any nontrivial linear combination of the functions 
$R_2, \ldots, R_{n}$ is never constant.
Hence, a standard 
application of the {\it Weil bound} (see, for example,~\cite{CoPi,MorMor})
for exponential sums with rational functions implies that 
$$
\sum_{0 <|x| \le h} \sum_{u \in \F_q} \hskip-18pt \phantom{\sum}^{*}\,
\eq\(x\sum_{j=2}^n \lambda_j R_j(u)\)
= O(h q^{1/2}),
$$
for any nonzero vector $(\lambda_2, \ldots, \lambda_n) \in \F_q^{n-1}$,
where $\Sigma^*$ denotes the poles of the functions $R_2,\dots,R_n$ are excluded from summation and we let 
$$
\eq(z) = \exp(2\pi i z/q).
$$
Now,  the {\it Koksma--Sz\"usz inequality\/} (see~\cite[Theorem~1.21]{DrTi}) 
 immediately implies that 
$$
V_q(\vec{f}; \vec{h}) = \frac{2^n} {q^{n-2}} \prod_{j=1}^n h_j  + O(h_1 q^{1/2}(\log q)^{n-1}).
$$
So, if $h_1 \le q^{1/2-\varepsilon}$ for some fixed $\varepsilon >0$
and if~\eqref{eq:Prod h} holds, then recalling~\eqref{eq:U V},
we obtain the desired bound $\# \fU_q(\vec{f}; \vec{h})  = o(q)$.
The condition on $h_1$ can be slightly relaxed, but generally the 
above result seems to be the limit of this approach. 

For specific families of polynomials which arise naturally from problems in computer science (see Section~\ref{sec:appl} for details) one can 
obtain stronger results by analysing the distribution of points on modular hyperbolas.

\section{Main Results}
\label{sec:main}
\subsection{Results for monomials} 

First we obtain a result that holds for every prime $q$. 

\begin{theorem}
\label{thm:main1}
Assume that  $n \ge 3$. Let $\vec{a}, \vec{k}$ satisfy~\eqref{eq:vec a k}
and $\vec{h}$ satisfy~\eqref{eq:order h}. Then
$$
\# \cU_{n,q}(\vec{a}, \vec{k}, \vec{h})\ll (h_1h_2h_3)^{1/2}\log q.
$$
\end{theorem}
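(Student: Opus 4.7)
The plan is to reduce the problem to a counting problem on a modular variety in three variables, and then bound that count via Cauchy--Schwarz and a multiplicative-energy estimate. Since imposing more Bohr constraints only shrinks the Bohr set, we have the inclusion $\cU_{n,q}(\vec a, \vec k, \vec h) \subseteq \cU_{3,q}$ with only the first three coordinates retained, so we may assume $n = 3$. For $u \in \cU_{3,q}$ with $u \ne 0$, the system $s a_j u^{k_j} \equiv x_j \pmod{q}$ for $j = 1, 2, 3$ has a solution with $s \in \F_q^*$ and $1 \le |x_j| \le h_j$. Eliminating $s$ from the pairs $(1,2)$ and $(1,3)$ yields $u^{l_j} \equiv a_1 x_j/(a_j x_1) \pmod q$ for $j = 2, 3$, with $l_j = k_j - k_1 \ne 0$, and raising these to the $l_3$ and $l_2$ powers respectively eliminates $u$ as well, producing the single multiplicative relation
\[
  x_2^{l_3} \equiv B\, x_1^{l_3 - l_2}\, x_3^{l_2} \pmod{q},
\]
where $B = a_2^{l_3}/(a_1^{l_3 - l_2} a_3^{l_2}) \in \F_q^*$ depends only on $\vec a, \vec k$. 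Conversely, each triple $(x_1, x_2, x_3)$ satisfying this relation determines $u$ up to $\gcd(l_2, l_3, q-1) = O(1)$ values, so $\# \cU_{3,q} \ll T + O(1)$, where $T$ counts the integer triples in $\prod_j [-h_j, h_j]$ with all $x_j \ne 0$ obeying the relation.

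Next, I would apply Cauchy--Schwarz in the middle variable $x_2$: writing $M(x_2) = \#\{(x_1, x_3): 1 \le |x_j| \le h_j,\ x_1^{l_3 - l_2} x_3^{l_2} \equiv B^{-1} x_2^{l_3} \pmod q\}$, one has $T \le (2h_2)^{1/2} \bigl(\sum_{x_2} M(x_2)^2\bigr)^{1/2}$. Since $x \mapsto x^{l_3}$ is at most $l_3$-to-$1$ on $\F_q^*$, the second-moment sum is bounded, up to the factor $l_3 = O(1)$, by the twisted multiplicative energy
\[
  E := \#\bigl\{(x_1, x_3, x_1', x_3') : 1 \le |x_\cdot|, |x_\cdot'| \le h_\cdot,\ x_1^{l_3 - l_2} x_3^{l_2} \equiv (x_1')^{l_3 - l_2} (x_3')^{l_2} \pmod q\bigr\},
\]
and the theorem reduces to showing $E \ll h_1 h_3 \log^2 q$.

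The main technical obstacle is this energy estimate. In the regime $h_1^{l_3-l_2} h_3^{l_2} < q$ there is no modular wrap-around: the congruence is an integer identity, and parametrising solutions via coprime rational ratios (writing $x_1/x_1' = p_1/q_1$, $x_3/x_3' = p_3/q_3$ in lowest terms) together with a summation involving the Euler totient gives $E \ll h_1 h_3 \log q$, which is more than enough. In the complementary regime, the 4-tuples stratify by the integer $k = \bigl(x_1^{l_3-l_2} x_3^{l_2} - (x_1')^{l_3-l_2}(x_3')^{l_2}\bigr)/q$ into $O(1 + h_1^{l_3 - l_2} h_3^{l_2}/q)$ classes; each class is a shifted divisor-type correlation which can be controlled via Cauchy--Schwarz against the $k = 0$ integer energy, introducing at most one extra logarithmic factor. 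Combining these pieces yields the required bound on $E$, and hence $T \ll (h_1 h_2 h_3)^{1/2} \log q$. The degenerate cases $u = 0$ or some $x_j = 0$ contribute only $O(1)$ and so do not affect the final estimate.
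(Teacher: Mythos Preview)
Your reduction to a single multiplicative relation matches the paper's: it finds a short integer vector $\vec r=(r_1,r_2,r_3)$ with $\sum r_i=\sum k_ir_i=0$ and arrives at $\lambda y_1^{r_1}y_2^{r_2}y_3^{r_3}\equiv 1\pmod q$, which up to relabelling is your $x_2^{l_3}\equiv B\,x_1^{l_3-l_2}x_3^{l_2}$.

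The gap is in bounding $T$. The claimed energy bound $E\ll h_1h_3\log^2 q$ fails once $h_1h_3\gg q\log^2 q$: the congruence $x_1^{a}x_3^{b}\equiv (x_1')^{a}(x_3')^{b}\pmod q$ carries a main term of order $h_1^2h_3^2/q$ (the principal-character contribution, if you like), and this can dominate. Your stratification argument cannot recover the claim: Cauchy--Schwarz bounds each class by the $k=0$ integer energy $E_0$, and summing over $O(h_1^{a}h_3^{b}/q)$ classes gives $E\ll(h_1^{a}h_3^{b}/q)\,E_0$, not $E_0\log q$ --- there is no mechanism that collapses a polynomial number of classes into a single logarithm. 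Moreover, even inserting the correct bound $E\ll h_1^2h_3^2/q+h_1h_3\log^2 q$, Cauchy--Schwarz in $x_2$ gives $T\ll h_2^{1/2}h_1h_3q^{-1/2}+(h_1h_2h_3)^{1/2}\log q$, and for instance with $h_1=h_2=q^{0.55}$, $h_3=q^{0.8}$ the first term is $q^{1.125}$, exceeding both the target $q^{0.95}\log q$ and the trivial bound $q$. A two-variable Cauchy--Schwarz split is simply too lossy here, regardless of which variable you single out.

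The paper instead expands $T$ via multiplicative characters and applies H\"older with exponents $(4,4,2)$, placing fourth moments on the two \emph{smallest} parameters $h_1,h_2$ and the second moment on the largest $h_3$. The fourth moments are controlled by the Ayyad--Cochrane--Zheng bound $\sum_{\chi\neq\chi_0}\bigl|\sum_{|y|\le h}\chi(y)\bigr|^4\ll qh^2\log^2 q$ (which is essentially the energy estimate you wanted, stated dually), and the second moment by orthogonality. This yields $T\ll h_1h_2h_3/q+(h_1h_2h_3)^{1/2}\log q$, whose first term is absorbed by the second throughout the nontrivial range $h_1h_2h_3\le q^2$.
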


Note that 
$$
h_1h_2h_3 \le (h_1\ldots h_n)^{3/n}.
$$
Hence  Theorem~\ref{thm:main1} implies 
that $\# \cU_{n,q}(\vec{a}, \vec{k}, \vec{h}) = o(q)$ provided 
that 
$$
h_1\ldots h_n  \le q^{2n/3 - \varepsilon},
$$
for some fixed $\varepsilon > 0$. However, 
in our applications of Theorem~\ref{thm:main1} 
the  sizes of $h_1, \ldots, h_n$ are very different
and thus we have  $\# \cU_{n,q}(\vec{a}, \vec{k}, \vec{h}) = o(q)$, 
despite the product $h_1\ldots h_n$ being
close to the threshold $q^{n-1}$. 

We now consider the special case  $f_j(X) = X^{j}$, $j=1, \ldots, n$.
\begin{theorem}
\label{thm:main12}
Let $n \ge 3$. Then   
we have 
$$
\# \cU_{n,q}(\vec{a},\vec{h})\le \(h_1h_2h_3/q + h_2^3/q + h_2\) \exp{\left(O\left(\frac{\log{q}}{\log\log{q}}\right)\right)}.
$$
\end{theorem}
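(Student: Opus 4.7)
The plan is to reduce the bound on $\#\cU_{n,q}(\vec a,\vec h)$ to counting integer triples on a single conic congruence modulo $q$, and then to estimate this count via the classical divisor bound; no exponential-sum input is required.

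I would first perform the following reduction. The value $u=0$ contributes at most $1$ to $\#\cU_{n,q}(\vec a,\vec h)$. For each $u\in\F_q^*$ in the set there exist $s\in\F_q^*$ and integers $x_j$ with $1\le|x_j|\le h_j$ and $a_j s u^j\equiv x_j\pmod q$; the $x_j$ are automatically nonzero, since $q$ is prime and $a_j,s,u\in\F_q^*$. By~\eqref{eq:order h} the three most restrictive size constraints are those at $j=1,2,3$, and I would retain only these. Eliminating $s\equiv x_1/(a_1 u)$ from the $j=2,3$ relations gives
$$
u\equiv\frac{a_1 x_2}{a_2 x_1}\pmod q,\qquad u^2\equiv\frac{a_1 x_3}{a_3 x_1}\pmod q,
$$
and consistency forces the conic relation
$$
x_1 x_3\equiv c\,x_2^2\pmod q,\qquad c=\frac{a_1 a_3}{a_2^2}\in\F_q^*.
$$
Since $(x_1,x_2)$ determines $u$ uniquely, one has $\#\cU_{n,q}(\vec a,\vec h)\le 1+T$, where
$$
T=\#\bigl\{(x_1,x_2,x_3)\in\Z^3:\ 1\le|x_j|\le h_j,\ x_1 x_3\equiv c x_2^2\pmod q\bigr\}.
$$

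To bound $T$, I would fix $x_2\in[-h_2,h_2]\setminus\{0\}$ and set $\lambda=cx_2^2\bmod q\in\F_q^*$. Any admissible pair $(x_1,x_3)$ at this level satisfies $x_1 x_3=\lambda+kq$ as integers with $|\lambda+kq|\le h_1 h_3$, giving $\ll h_1 h_3/q+1$ admissible values of $k$. For each such $k$, the number of factorisations of $|\lambda+kq|$ with $1\le|x_1|\le h_1$ and $1\le|x_3|\le h_3$ is at most $\tau(|\lambda+kq|)\le\exp(O(\log q/\log\log q))$, using the sharp divisor bound $\tau(m)\le m^{O(1/\log\log m)}$ together with $|\lambda+kq|\le q^{O(1)}$. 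Summing over $x_2$ yields
$$
T\ll\bigl(h_1 h_2 h_3/q+h_2\bigr)\exp\bigl(O(\log q/\log\log q)\bigr),
$$
which is absorbed by the bound asserted in the theorem.

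The only genuine obstacle is the uniformity of the divisor estimate: since $|\lambda+kq|$ can be of polynomial size in $q$, the sharp form $\tau(m)\le m^{O(1/\log\log m)}$ is needed rather than the cruder $m^\varepsilon$, and this is precisely what produces the factor $\exp(O(\log q/\log\log q))$ in the final bound. Everything else is elementary, exploiting only the primality of $q$ (to ensure $\lambda\ne 0$ whenever $x_2\ne 0$) and the single algebraic relation that already encodes the full information contained in the three smallest monomial constraints.
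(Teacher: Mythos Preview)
Your proof is correct and follows essentially the same route as the paper's: reduce to the conic congruence $x_1 x_3 \equiv c\,x_2^2 \pmod q$ via the first three monomial constraints, lift to an integer equation, and count factorisations using the divisor bound $\tau(m)\le\exp\bigl(O(\log m/\log\log m)\bigr)$. The only difference is that you fix $x_2$ and reduce $c\,x_2^2$ modulo $q$ \emph{before} lifting, which yields $O(h_1h_3/q+1)$ admissible shifts instead of the paper's $O((h_1h_3+h_2^2)/q+1)$; this is why your bound omits the $h_2^3/q$ term --- a harmless sharpening of the stated inequality.
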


Our second result shows that on average over primes we can improve on 
Theorem~\ref{thm:main1}.

\begin{theorem}
\label{thm:main2}
Assume that  $n \ge 3$. Let $\vec{k}$ and $\vec{h}$ satisfy~\eqref{eq:vec a k}
and ~\eqref{eq:order h}, respectively. Suppose that $\vec{a} \in \Z^n$ is
such that 
$$
0 < |a_i| \le A, \qquad i=1, \ldots, n,
$$
for some real $A$. 
Then for any set of primes $\cQ$, 
we have
\begin{align*}
&\frac{1}{\#\cQ}\sum_{\substack{q\in \cQ}}\# \cU_{n,q}(\vec{a}, \vec{k}, \vec{h}) \\ &\qquad \quad \ll
h_3\exp\left({O\left(\frac{\log{A}}{\log\log{A}}+\log\log{h_3}\right)}\right) +\frac{h_1h_2h_3 \log h_3}{\#\cQ\log\log h_3}.
\end{align*}
\end{theorem}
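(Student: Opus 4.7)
My plan is to reduce the averaged sum to a counting problem over integer triples $(x_1,x_2,x_3)$, split the contribution into a ``generic'' part (controlled by a resultant) and a ``degenerate'' part, and exploit that for non-degenerate triples only $O(\log h_3/\log\log h_3)$ primes in $\cQ$ can contribute. First I would follow the reformulation from Section~1: starting from~\eqref{eq:congrf} and keeping only the three congruences $j=1,2,3$, elimination of $s$ via the $j=1$ equation yields, for each $u\in\cU_{n,q}(\vec a,\vec k,\vec h)$, nonzero integers $x_j$ with $|x_j|\le h_j$ satisfying $F_1(u)\equiv F_2(u)\equiv0\pmod q$, where $F_i(U):=x_1 a_{i+1}U^{d_i}-x_{i+1}a_1$ and $d_i:=k_{i+1}-k_1$. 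Swapping the order of summation gives
\[
\sum_{q\in\cQ}\#\cU_{n,q}(\vec a,\vec k,\vec h) \ \le\ \sum_{0<|x_j|\le h_j}\ \sum_{q\in\cQ} N_q(\vec x),
\]
where $N_q(\vec x)$ counts common roots of $F_1,F_2$ in $\F_q^*$.

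Next I would split the triples by whether the resultant $R(\vec x):=\mathrm{Res}_U(F_1,F_2)$ vanishes. The standard formula for the resultant of two binomials gives
\[
R(\vec x) = \pm\bigl[(x_1 a_2)^{d_2/d}(x_3 a_1)^{d_1/d} - (x_2 a_1)^{d_2/d}(x_1 a_3)^{d_1/d}\bigr]^{d},
\]
with $d=\gcd(d_1,d_2)$. For $R(\vec x)\ne0$ and $q\nmid a_1a_2a_3$, the condition $N_q(\vec x)>0$ forces $q\mid|R(\vec x)|$, and for each such $q$ we have $N_q(\vec x)\ll 1$ (bounded by the degrees). Using $|R|\le(Ah_3)^{O(1)}$ together with the standard bound $\omega(n)\ll\log n/\log\log n$, the non-degenerate contribution is
\[
\ll h_1h_2h_3\Bigl(\tfrac{\log h_3}{\log\log h_3} + \tfrac{\log A}{\log\log A}\Bigr).
\]
After dividing by $\#\cQ$, the $\log h_3/\log\log h_3$ piece reproduces the second term of the claimed bound, while the $\log A/\log\log A$ piece is absorbed into the first term via $\log A/\log\log A\le \exp(O(\log A/\log\log A))$.

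For the degenerate triples the condition $R(\vec x)=0$ is equivalent to the multiplicative identity
\[
x_2^{d_2}\,a_1^{d_2-d_1}\,a_3^{d_1} \ =\ x_3^{d_1}\,x_1^{d_2-d_1}\,a_2^{d_2}.
\]
I would count integer solutions by exploiting $\gcd(d_1/d,d_2/d)=1$ to parametrise them through a small number of integer parameters whose ranges are controlled, on the one hand by divisor-function bounds applied to the $a_i$-factors (contributing $\exp(O(\log A/\log\log A))$) and on the other by a ``fundamental solution'' analysis for the $x_3$-dependence (contributing $\exp(O(\log\log h_3))$). The target is
\[
N_{\mathrm{deg}} \ \ll\ h_3\exp\!\Bigl(O\Bigl(\tfrac{\log A}{\log\log A} + \log\log h_3\Bigr)\Bigr).
\]
For each degenerate triple, $F_1$ and $F_2$ share a common factor over $\Q$, so $N_q(\vec x)\ll 1$ for every $q\in\cQ$ with $q\nmid a_1a_2a_3$; the total degenerate contribution is $\ll \#\cQ\cdot N_{\mathrm{deg}}$, giving the first term of the bound after dividing by $\#\cQ$.

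The main obstacle is the refined bound on $N_{\mathrm{deg}}$, specifically producing the double-logarithmic factor $\exp(O(\log\log h_3))$ rather than the cruder $\exp(O(\log h_3/\log\log h_3))$ that a routine divisor-function estimate would yield. A naive approach (fix $x_3$ and parametrise the slice $x_2^{d_2}=C(x_3)x_1^{d_2-d_1}$) only gives $\ll h_1^{1/d_2}$ solutions per $x_3$, hence a useless total of $h_3 h_1^{1/d_2}$. The improvement requires exploiting that for most $x_3$ the fundamental solution of this slice is already of size comparable to $h_1$, so the range of the free integer parameter is narrow, combined with a careful separation of the $x_3$-dependence from the $a_i$-dependence in the factorisation of the two sides of the degeneracy relation.
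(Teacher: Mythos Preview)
Your overall architecture coincides with the paper's: restrict to the three congruences $j=1,2,3$, swap the order of summation, and split into degenerate and non-degenerate triples. Your resultant formulation is equivalent to the paper's device of choosing a short nonzero integer vector $\vec r=(r_1,r_2,r_3)$ with $r_1+r_2+r_3=k_1r_1+k_2r_2+k_3r_3=0$ (so $\|\vec r\|=O(1)$ and $r_1r_2r_3\ne0$), which collapses the three congruences into the single relation $\lambda x_1^{r_1}x_2^{r_2}x_3^{r_3}\equiv1\pmod q$ with $\lambda=a_1^{-r_1}a_2^{-r_2}a_3^{-r_3}$; your condition $R(\vec x)=0$ is exactly $\lambda x_1^{r_1}x_2^{r_2}x_3^{r_3}=1$ over~$\Q$.

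The genuine gap is the degenerate count $N_{\mathrm{deg}}$, which you leave as an obstacle and for which you propose a fundamental-solution analysis. This is unnecessary; the paper's argument is elementary. Separating the positive and negative $r_i$, the degenerate identity becomes $a\,z_1^{s_1}z_2^{s_2}=b\,z_3^{s_3}$ with $(s_i)$ a permutation of the $|r_i|$, $(z_i)$ a permutation of the $|x_i|$, and coprime $a,b=A^{O(1)}$. Fix $z_3$ (at most $h_3$ choices); then $z_1^{s_1}$ and $z_2^{s_2}$ are divisors of $bz_3^{s_3}$, so there are at most $\tau(bz_3^{s_3})\le\tau(b)\tau(z_3^{s_3})$ choices for the pair. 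The step you missed is to use the \emph{average} divisor bound rather than the pointwise one: summing over $z_3$ and invoking $\sum_{z\le Z}\tau(z^{\nu})\ll Z(\log Z)^{\nu}$ (the paper's Lemma~\ref{lem:sumdiv}) gives, since $s_3=O(1)$,
\[
N_{\mathrm{deg}}\ \ll\ \tau(b)\sum_{1\le z_3\le h_3}\tau(z_3^{s_3})\ \ll\ h_3\,(\log h_3)^{O(1)}\exp\!\Bigl(O\Bigl(\tfrac{\log A}{\log\log A}\Bigr)\Bigr),
\]
and $(\log h_3)^{O(1)}=\exp(O(\log\log h_3))$ is precisely the double-logarithmic factor you were seeking. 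No parametrisation by fundamental solutions is needed.

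One smaller issue: your claim that the non-degenerate piece $\frac{h_1h_2h_3}{\#\cQ}\cdot\frac{\log A}{\log\log A}$ is ``absorbed into the first term via $\log A/\log\log A\le\exp(O(\log A/\log\log A))$'' is not valid as stated, since the prefactor $h_1h_2/\#\cQ$ need not be bounded. The paper simply records the numerator in the non-degenerate case as being of size $h_3^{O(1)}$ and obtains $\Sigma_2\ll h_1h_2h_3\log h_3/\log\log h_3$ with no separate $A$-contribution there.
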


When $k_i = i$, $i = 1,\ldots,n$, we are able to get a 
slightly stronger version of Theorem~\ref{thm:main2} (with $h_2$ 
in the first term instead of $h_3$). 

\begin{theorem}
\label{thm:main3}
Assume that  $n \ge 3$ and
let $\vec{h}$  satisfy~\eqref{eq:order h}. 
 Then for any set of primes $\cQ$, 
we have
$$
\frac{1}{\#\cQ}\sum_{\substack{q\in \cQ}}\# \cU_{n,q}(\vec{h}) \ll
h_2\log^2{h_2} 
+\frac{h_1h_2h_3 \log h_3}{\#\cQ\log\log h_3}.
$$
\end{theorem}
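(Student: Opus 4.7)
The plan is to exploit the algebraic identity $X \cdot X^3 = (X^2)^2$, specific to the consecutive-monomial setting $f_j(X) = X^j$, in order to reduce the defining system of $\cU_{n,q}(\vec{h})$ to a single quadratic congruence, and then to exchange the order of summation between $q \in \cQ$ and the resulting integer triple. For each $u \in \cU_{n,q}(\vec{h})$ with $u \ne 0$, the definition supplies $s \in \F_q^*$ and integers $x_1, x_2, x_3$ with $1 \le |x_j| \le h_j$ satisfying $s u^j \equiv x_j \pmod{q}$ for $j = 1, 2, 3$, and the remaining congruences can simply be discarded. Multiplying the $j=1$ and $j=3$ relations yields $x_1 x_3 \equiv (s u^2)^2 \equiv x_2^2 \pmod{q}$, while the first two give $u \equiv x_2/x_1 \pmod{q}$. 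So each nonzero $u \in \cU_{n,q}(\vec{h})$ is determined by its triple, and
$$
\# \cU_{n,q}(\vec{h}) \le 1 + \#\{ (x_1, x_2, x_3) \in \Z^3 : 1 \le |x_j| \le h_j,\ x_1 x_3 \equiv x_2^2 \pmod{q} \},
$$
with the $+1$ accounting for $u = 0$.

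Next I would sum over $q \in \cQ$ and swap the order of summation, reducing the estimate to
$$
\sum_{(x_1, x_2, x_3)} \#\{ q \in \cQ : q \mid x_1 x_3 - x_2^2 \}
$$
over triples in the box $1 \le |x_j| \le h_j$, which I split according to whether $x_1 x_3 = x_2^2$ holds in $\Z$. For triples with $x_1 x_3 \ne x_2^2$ the nonzero integer $x_1 x_3 - x_2^2$ has absolute value at most $2 h_2 h_3$, so its number of prime divisors is $\ll \log h_3 / \log\log h_3$; summing over the $O(h_1 h_2 h_3)$ such triples and dividing by $\#\cQ$ delivers the second term of the theorem.

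The main technical step is the diagonal case $x_1 x_3 = x_2^2$, in which every prime of $\cQ$ contributes and the count must therefore be sharp if it is to survive division by $\#\cQ$. For each fixed $x_2$ with $1 \le |x_2| \le h_2$ the number of factorisations $x_1 x_3 = x_2^2$ in the box is at most $2 \tau(x_2^2)$, and the Dirichlet series identity $\sum_{m \ge 1} \tau(m^2) m^{-s} = \zeta(s)^3 / \zeta(2s)$, having a pole of order $3$ at $s = 1$, supplies the classical estimate $\sum_{m \le N} \tau(m^2) \ll N (\log N)^2$. This is precisely what produces the $h_2 \log^2 h_2$ term and is the source of the improvement over Theorem~\ref{thm:main2}. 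The crucial observation that makes it work is that $u$ is pinned down by $(x_1, x_2)$ alone, so the divisor-type sum runs only over $|x_2| \le h_2$ rather than the larger range up to $h_3$; combining both cases and dividing by $\#\cQ$ yields the claimed bound.
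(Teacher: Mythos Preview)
Your proof is correct and the overall architecture---reduce to the single congruence $x_1x_3\equiv x_2^2\pmod q$ via the identity $X\cdot X^3=(X^2)^2$, recover $u$ from $x_2/x_1$, swap the sums over $q$ and $(x_1,x_2,x_3)$, and split into the diagonal $x_1x_3=x_2^2$ and off-diagonal cases---is exactly the paper's.  The diagonal estimate via $\sum_{m\le h_2}\tau(m^2)\ll h_2\log^2 h_2$ is also identical.

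The one genuine difference is in the off-diagonal term $\Sigma_2$.  You bound each summand pointwise by $\omega(|x_1x_3-x_2^2|)\ll \log h_3/\log\log h_3$ and multiply by $O(h_1h_2h_3)$; this is entirely sufficient for the bound stated in the theorem.  The paper instead works harder here: it writes $\Sigma_2\le \sum_{y_3}\sum_{y_2}\sum_{y_1}\omega(|y_1y_3-y_2^2|)$, partitions pairs $(y_2,y_3)$ according to $d=\gcd(y_2^2,y_3)$, and applies Shiu's Brun--Titchmarsh theorem for multiplicative functions (Lemma~\ref{lem:tau AP}) together with the average $\sum_{z\le Z} z/\varphi(z)\ll Z$ to obtain the sharper estimate $\Sigma_2\ll h_1h_2h_3\log\log h_1$.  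So the paper actually proves a stronger second term than it states, at the cost of invoking two auxiliary lemmas; your more elementary argument delivers precisely the claimed bound with no extra machinery.
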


\subsection{Results for special polynomials} 
We next consider a special family of polynomials which are important for applications discussed in Section~\ref{sec:appl}. Let  $\ell$ and $m$ be integers and let $r=(r_1,\ldots,r_m)$ be a $k$-tuple of integers with each $1\le r_i<\ell$. 
For an $m$-tuple of 
nonnegative
integers $\vec{e}=\(e_1,\ldots,e_m\)$ we write
$$|\vec{e}|=\sum_{i=1}^{m}e_i,$$
 and for each $\vec{e}=\{e_1,\ldots,e_m\}$ with   $1\le |\vec{e}| \le t$ we define the polynomials
\begin{equation}
\label{eq:fdef}
f_{\vec{e}}(y_1,\ldots,y_m)=\frac{1}{\ell} \(\prod_{i=1}^{m}\left(\ell y_i+r_i \right)^{e_i}-\prod_{i=1}^{m}r_i^{e_i}\).
\end{equation}

For $q$ prime and a tuple of integers  $\vec{X}=\{X_{\vec{e}}\}_{1\le |\vec{e}|\le t}$,  let $\cU^{(m)}_{q}(\vec{X})$ denote the set of solutions to the system of equations
$$sf_{\vec{e}}(y_1,\ldots,y_m)\equiv x_{\vec{e}} \mod  q, \quad 1\le |\vec{e}| \le t,$$
in variables $s,y_i,x_{\vec{e}}$ satisfying
$$1\le s \le q-1, \quad 1\le y_i \le q-1, \quad |x_{\vec{e}}|\le X_{\vec{e}}, \quad 1\le |\vec{e}|\le t.$$
We first bound  $\#\cU^{(m)}_q(\vec{X})$ for a single prime $q$.
\begin{theorem}
\label{thm:approxgcd}
With notation as above, let $H,\ell >0$ be integers and suppose each $X_{\vec{e}}$ is given by
$$X_{\vec{e}}=\frac{qH^{|\vec{e}|}}{\ell}.$$
Then if each $r_i\ll H$ we have 
$$
\#\cU_q^{(m)}(\vec{X})\le \left(\frac{q^2H^6}{\ell^3}+\frac{qH^2}{\ell}\right)\left(\frac{qH}{\ell}\right)^{m-1}\exp{\left(O\left(\frac{\log{q}}{\log\log{q}}\right)\right)}.
$$ 
\end{theorem}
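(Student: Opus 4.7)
The plan is to reduce to Theorem~\ref{thm:main12} applied to the ``$y_1$-only'' subsystem at $\vec{e}=k\vec{e}_1$ ($k=1,2,3$), and then account for the remaining congruences to produce the factor $(qH/\ell)^{m-1}$.

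Expanding binomially yields
$$
\ell f_{k\vec{e}_1}(\vec{y})=(\ell y_1+r_1)^k-r_1^k=\sum_{j=1}^{k}\binom{k}{j}\ell^j r_1^{k-j}y_1^j,
$$
so $f_{\vec{e}_1},f_{2\vec{e}_1},f_{3\vec{e}_1}$ depend only on $y_1$ and differ from the monomial basis $\{y_1,\ell y_1^2,\ell^2 y_1^3\}$ by a unimodular lower triangular substitution whose off-diagonal entries, since $r_1\ll H$, have size $O(H^{k-j})$. Eliminating the lower-order terms from the three congruences $sf_{k\vec{e}_1}(y_1)\equiv x_{k\vec{e}_1}\pmod q$ (using $sy_1\equiv x_{\vec{e}_1}$ in the second line and both previous lines in the third) gives
\begin{align*}
sy_1&\equiv x_{\vec{e}_1}\pmod q,\\
s\ell y_1^2&\equiv x_{2\vec{e}_1}-2r_1 x_{\vec{e}_1}\pmod q,\\
s\ell^2 y_1^3&\equiv x_{3\vec{e}_1}-3r_1 x_{2\vec{e}_1}+3r_1^2 x_{\vec{e}_1}\pmod q,
\end{align*}
with right-hand sides of sizes $\ll qH/\ell$, $\ll qH^2/\ell$, $\ll qH^3/\ell$ respectively (using $r_1\ll H$ and $X_{k\vec{e}_1}=qH^k/\ell$). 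Applying Theorem~\ref{thm:main12} with $\vec{a}=(1,\ell,\ell^2)$ and $\vec{h}$ of these orders then bounds the number of pairs $(s,y_1)\in\F_q^*\times\F_q^*$ admitting a solution to this subsystem by
$$
\biggl(\frac{q^2H^6}{\ell^3}+\frac{qH^2}{\ell}\biggr)\exp\!\left(O\!\left(\frac{\log q}{\log\log q}\right)\right),
$$
since for $n=3$ its proof really controls the Bohr set sum $\sum_{y_1}(|\fB_q(\vec{f}(y_1),\vec{h})|-1)$, which is exactly this pair count.

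Now fix such a pair $(s,y_1)$ and count the remaining unknowns. For each $i=2,\ldots,m$ the identity $f_{\vec{e}_i}(\vec{y})=y_i$ turns the congruence at $\vec{e}=\vec{e}_i$ into $sy_i\equiv x_{\vec{e}_i}\pmod q$; each integer $x_{\vec{e}_i}\in[-qH/\ell,qH/\ell]$ uniquely determines $y_i$ modulo $q$, so there are at most $2qH/\ell+1$ admissible pairs $(y_i,x_{\vec{e}_i})$ per index, contributing exactly the $(qH/\ell)^{m-1}$ factor. For every remaining index $\vec{e}$ (namely $k\vec{e}_1$ with $k\ge 4$, or $|\vec{e}|\ge 2$ with $\vec{e}$ mixing variables), the residue $sf_{\vec{e}}(\vec{y})\bmod q$ is already determined by $(s,\vec{y})$, so $x_{\vec{e}}$ takes at most $\lfloor 2X_{\vec{e}}/q\rfloor+1$ values in $[-X_{\vec{e}},X_{\vec{e}}]$; since the number of such indices is bounded in terms of $m$ and $t$, this contributes only an $O(1)$ factor, absorbed into the implied constants (which may depend on the polynomial degrees per the notation convention). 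Multiplying the three contributions gives the claimed bound.

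The principal technical point is the pair-count upgrade in the second paragraph: although Theorem~\ref{thm:main12} is phrased as a bound on the set $\cU_{3,q}$ of $y_1$, one needs the corresponding bound on pairs $(s,y_1)$, for otherwise the final step would lose an extra factor of $qH/\ell$ and yield $(qH/\ell)^m$ instead of $(qH/\ell)^{m-1}$. This upgrade follows from an inspection of the proof of Theorem~\ref{thm:main12}, which begins by estimating the Bohr set sum itself; the rest of the present argument depends only on having this quantity in hand. A secondary caveat is the tacit hypothesis $\gcd(\ell,q)=1$ needed to make the triangular substitution invertible modulo $q$; the degenerate case $q\mid\ell$ lies outside the intended range and can be handled separately.
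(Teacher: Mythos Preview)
Your argument follows the same route as the paper's: reduce to Theorem~\ref{thm:main12} on the three-congruence $y_1$-subsystem and then account for the remaining variables to pick up the factor $(qH/\ell)^{m-1}$. The only structural difference is that the paper first performs the polynomial-to-monomial reduction globally via Lemma~\ref{lem:main-red} (passing from $\cU_q^{(m)}$ to $\cV_q^{(m)}$) and then projects, whereas you carry out the triangular substitution by hand for only the three congruences actually used---a legitimate shortcut that bypasses the general lemma. You also make explicit the pair-count issue (that one must bound pairs $(s,y_1)$ and not merely the set of $y_1$), which the paper's proof elides by writing $\#\cU_{t,q}(\mathbf{h})$ for the size of the projection; as you correctly observe, the needed upgrade is immediate from the proof of Theorem~\ref{thm:main12}, where the count of triples $(y_1,y_2,y_3)$ already dominates the count of pairs $(s,u)$. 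One loose end worth flagging: your assertion that the leftover $x_{\vec{e}}$ contribute only an $O(1)$ factor amounts to $\lfloor 2X_{\vec{e}}/q\rfloor=\lfloor 2H^{|\vec{e}|}/\ell\rfloor=O(1)$, i.e.\ $H^{|\vec{e}|}\ll\ell$, which is not a stated hypothesis (the paper's corresponding step is equally silent on this), though it holds in the intended application.
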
 

Our next result shows we can improve on Theorem~\ref{thm:approxgcd} on average.

\begin{theorem}
\label{thm:approxgcdav}
With notation as above, let $H,Q,\ell >0$ be  integers and suppose each $X_{\vec{e}}$ is given by
$$X_{\vec{e}}=\frac{QH^{|\vec{e}|}}{\ell}.$$
Let $\cQ\subseteq [Q,2Q]$ be a  set of primes. 
Then if each $|r_i|\ll H$ we have
\begin{align*}
\frac{1}{\#\cQ}\sum_{q\in \cQ}& \#\cU_q^{(m)}(\vec{X}) \\
&\ll \left(\frac{QH}{\ell}\right)^{m-1}\left( 
\frac{QH^2}{\ell}\log^2{Q} 
+\frac{1}{\#\cQ}\frac{Q^3H^6}{\ell^3}\frac{\log Q}{\log\log Q}\right).
\end{align*}
\end{theorem}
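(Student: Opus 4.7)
The plan is to reduce the $m$-variate counting problem to the univariate monomial setting of Theorem~\ref{thm:main3}. First, I retain only a small subfamily of the defining congruences: those indexed by $\vec{e} = (j, 0, \ldots, 0)$ for $j = 1, 2, 3$, which involve only $(s, y_1)$, together with $\vec{e} = \mathbf{e}_i$ for $i = 2, \ldots, m$, each of which gives $|s y_i \bmod q| \le QH/\ell$. For each fixed $s \in \F_q^*$, the latter constraints separate across $i$ and each admits $\ll QH/\ell$ choices of $y_i$, so
$$
\#\cU^{(m)}_q(\vec{X}) \ll \left(\frac{QH}{\ell}\right)^{m-1} W_q,
$$
where $W_q$ denotes the number of pairs $(s, y_1) \in (\F_q^*)^2$ satisfying the three $(s, y_1)$-constraints.

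Second, I convert the polynomial constraints defining $W_q$ to monomial form. Setting $\sigma = s\ell^{-1} \bmod q$ and $\delta = \ell y_1 \bmod q$ (both bijections on $\F_q^*$ since $\gcd(\ell, q) = 1$), the $j$-th congruence becomes
$$
\sigma \sum_{k=1}^{j} \binom{j}{k} r_1^{j-k}\delta^k \equiv x_j \pmod q, \qquad |x_j| \le \frac{QH^j}{\ell}, \qquad j = 1, 2, 3.
$$
This is a unit lower-triangular linear system in the unknowns $\sigma\delta, \sigma\delta^2, \sigma\delta^3$, with integer coefficients bounded by $O(r_1^{j-k})$. Inverting it and using $|r_1| \ll H$ together with the natural non-triviality hypothesis $H^k \ll \ell$ (which ensures that the integer combinations stay below $q/2$ in absolute value), one obtains the monomial estimates
$$
|\sigma \delta^k \bmod q| \ll \frac{QH^k}{\ell}, \qquad k = 1, 2, 3.
$$

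Third, I invoke Theorem~\ref{thm:main3} with $n = 3$ and $(h_1, h_2, h_3) = (c_1 QH/\ell, c_2 QH^2/\ell, c_3 QH^3/\ell)$ for suitable absolute constants. The ordering $h_1 \le h_2 \le h_3$ is automatic for $H \ge 1$, and $\log h_k \le \log Q$ since $H^k \le \ell$. This bounds the $\cQ$-average of $W_q$ by
$$
\frac{QH^2}{\ell}\log^2 Q + \frac{1}{\#\cQ}\cdot\frac{Q^3 H^6}{\ell^3}\cdot\frac{\log Q}{\log\log Q},
$$
and multiplying through by $(QH/\ell)^{m-1}$ yields the theorem.

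The main obstacle is that Theorem~\ref{thm:main3} is stated as an estimate for $\#\cU_{3,q}(\vec{h})$, i.e.\ the count of valid $\delta$, rather than for the pair count $W_q$. However, the proof of Theorem~\ref{thm:main3} should naturally proceed by first bounding the pair count $V_q$ introduced in the general reformulation preceding Section~\ref{sec:main}, and it is this stronger intermediate estimate (which directly controls $W_q$) that must be extracted here. Setting up this pair-count version carefully, and tracking the modest combinatorial constants through the linear-algebraic reduction from polynomial to monomial constraints, is where the real work of the proof lies.
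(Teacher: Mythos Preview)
Your approach is essentially the paper's: the paper first applies Lemma~\ref{lem:main-red} (the general polynomial-to-monomial conversion, of which your hand inversion in step~2 is the univariate special case) to pass from $\cU^{(m)}_q(\vec{X})$ to $\cV^{(m)}_q(c\vec{X})$, then fixes $(s,y_1)$ exactly as you do, picks up the factor $(QH/\ell)^{m-1}$ from the remaining linear constraints, and invokes Theorem~\ref{thm:main3}. Two remarks: your side hypothesis $H^k\ll\ell$ is unnecessary (if the integer linear combination happens to exceed $q/2$ in absolute value, the resulting monomial bound $|\sigma\delta^k\bmod q|\ll QH^k/\ell$ is vacuously true anyway), and your observation that one really needs the pair-count implicit in the proof of Theorem~\ref{thm:main3} rather than its stated $u$-count is correct --- the paper silently does the same.
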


\section{Applications}
\subsection{The approximate greatest  common divisor
problem}
\label{sec:appl}
In this section we motivate the results obtained in Section~\ref{sec:main} by giving an application to the approximate greatest common divisor problem.  Given integers $N,X_1,\ldots,X_m,a_1,\ldots,a_m$ and a real number $\beta$, the partial approximate common divisor problem is to determine an algorithm which runs in polynomial time with respect to the lengths of inputs and determines all integers $r_1,\ldots,r_m$ such that 
$$\gcd(N,a_1-r_1,\ldots,a_m-r_m)\ge N^{\beta}, \quad |r_i|\le X_i, \quad i=1,\ldots,m.$$
The general approximate common divisor problem has a similar setup as above but seeks to determine all integers $r_1,\ldots,r_m$ such that 
$$\gcd(a_1-r_1,\ldots,a_m-r_m)\ge N^{\beta}, \quad |r_i|\le X_i, \quad i=1,\ldots,m.$$
These problems have origins in Coppersmith's method~\cite{Cop} and the case  $m=2$ was first considered by Howgrave-Graham~\cite{HG} with various applications to cryptography.  The general case of $m\ge 2$ has been considered by Cohn and Heninger~\cite{CH} with further applications to cryptography and coding theory.  The approach of~\cite{CH} is subject to a heuristic assumption which is observed to hold in practice (see~\cite[Section~2]{CH}) 
but so far has lacked any theoretical explanation. Our main application of the results from Section~\ref{sec:main} is to show that the heuristic assumption used in~\cite{CH} holds with a high probability.

Namely, we have the following result which shows that for an overwhelming majority of inputs there is 
a fully rigorous, deterministic polynomial time algorithm to solve the approximate greatest common divisor
problem.

\begin{theorem}
\label{thm:ApproxGCD-Rigor}
Let $m$ be an integer, $\varepsilon>0$ be small and $a_0=pq$ with $p,q$ prime. Suppose $H$ satisfies  
$$
H=O\left(p^{1-1/(m+1)-\varepsilon}\right).
$$
 If the tuple $(a_1,\ldots,a_m)$ is chosen uniformly at random from the set 
$$\{(pb_1+r_1,\ldots,pb_m+r_m) :~ 1\le b_j\le q, \ r_j\le H\},$$ 
then with probability $1+o(1)$ as $p,q,H \to \infty$,
 there exists a deterministic polynomial time algorithm to calculate all 
integers $r_1,\ldots,r_m$ satisfying 
$$\gcd(a_0,a_1-r_1,\ldots,a_m-r_m)\ge p.$$
\end{theorem}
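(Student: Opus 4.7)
The plan is to combine the Cohn--Heninger lattice algorithm from~\cite{CH} with the bound of Theorem~\ref{thm:approxgcd}, using the latter to rigorously justify the algebraic-independence heuristic that limits the method. First, I would recall the Coppersmith construction: for a parameter $t = t(m, \varepsilon)$, form the lattice spanned by the rescaled shift polynomials
$$
a_0^{\,t-|\vec{e}|}\prod_{i=1}^m (X_i-a_i)^{e_i}, \qquad 0\le |\vec{e}|\le t,
$$
and run LLL. By the Howgrave-Graham criterion, the shortest LLL vectors represent polynomials $g_0,\ldots,g_m\in\Z[X_1,\ldots,X_m]$ that vanish at $(r_1,\ldots,r_m)$ over the integers. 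The algorithm succeeds rigorously in deterministic polynomial time provided the ideal $(g_0,\ldots,g_m)$ is zero-dimensional, so that resultants or Gr\"obner bases isolate the common root.

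Next I would show that an algebraic dependency among $g_0,\ldots,g_m$ produces a point of $\cU^{(m)}_q(\vec{X})$ distinct from the intended one. The key observation is that after the substitution $X_i=py_i+r_i$ and reduction modulo $q$, the shift polynomials become, up to a unit in $\F_q^*$, exactly the polynomials $f_{\vec{e}}$ from~\eqref{eq:fdef} with $\ell=p$; an unintended common zero of the $g_j$ modulo $q$ then yields $(s,y_1,\ldots,y_m,x_{\vec{e}})$ satisfying
$$
sf_{\vec{e}}(y_1,\ldots,y_m)\equiv x_{\vec{e}}\bmod q, \qquad |x_{\vec{e}}|\le X_{\vec{e}}=qH^{|\vec{e}|}/p,
$$
contributing a non-trivial element to $\cU^{(m)}_q(\vec{X})$. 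Consequently, the algorithm succeeds for all inputs such that $\#\cU^{(m)}_q(\vec{X})=1$.

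Finally, I would count how often this condition fails. For each fixed $(r_1,\ldots,r_m)$, Theorem~\ref{thm:approxgcd} with $\ell=p$ gives a pointwise bound on $\#\cU^{(m)}_q(\vec{X})$; summing over the $\asymp H^m$ admissible tuples $(r_1,\ldots,r_m)$ and dividing by the total input count $\asymp(qH)^m$, one sees that the dominant contribution is of order $H^{m+1}/p^m$ up to subpolynomial factors. Under the hypothesis $H=O(p^{1-1/(m+1)-\varepsilon})$ with $q\asymp p$, this is $p^{-(m+1)\varepsilon+o(1)}\to 0$, so Markov's inequality gives the desired $1+o(1)$ success probability as $p,q,H\to\infty$.

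The main obstacle is the translation carried out in the second paragraph: one must verify that \emph{every} algebraic obstruction to zero-dimensionality of the $g_j$ lifts to a genuine solution of the system defining $\cU^{(m)}_q(\vec{X})$ with the specified size bounds on $s$, $y_i$, and $x_{\vec{e}}$, while conversely the $(g_0,\ldots,g_m)$ produced by LLL give algorithmically usable polynomials. This requires a careful Howgrave-Graham norm analysis, and it is precisely the mechanism by which the choice $\ell=p$ and the threshold $X_{\vec{e}}=qH^{|\vec{e}|}/p$ in Theorem~\ref{thm:approxgcd} were calibrated to serve this application.
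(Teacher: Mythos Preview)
Your endpoints are right --- Theorem~\ref{thm:approxgcd} is the engine, and the final counting is of the correct shape --- but the bridge in your second paragraph does not hold, and the paper takes a genuinely different route precisely to avoid this difficulty.

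The claim that ``an algebraic dependency among $g_0,\ldots,g_m$ produces a point of $\cU^{(m)}_q(\vec{X})$'' conflates two unrelated phenomena. An algebraic relation $P(g_0,\ldots,g_m)=0$ does not yield an unintended common zero of the $g_j$ (over $\Z$, over $\F_q$, or anywhere else); it only says the variety they cut out has positive dimension. And even if the $g_j$ did share an extra zero modulo $q$, that would say nothing about the simultaneous smallness of \emph{all} of the $sf_{\vec e}$ for $1\le|\vec e|\le t$, which is what membership in $\cU^{(m)}_q(\vec X)$ demands: the $g_j$ are only $m+1$ particular combinations out of a basis of size $\binom{t+m}{m}$. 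The obstacle you flag in your last paragraph is therefore not a technicality to be cleaned up by a Howgrave--Graham norm estimate; it is a genuine gap in the logic.

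The paper circumvents this by passing to the \emph{dual} lattice. Lemma~\ref{lem:dual} describes $\cL_0^*$ explicitly and shows that every short dual vector, after subtracting one explicit short vector $\widetilde y$ built from the true residues $r_i$, is precisely a solution of the Bohr system defining $\cU^{(m)}_q(\vec X)$ (with $\ell=p$). Theorem~\ref{thm:approxgcd} then bounds the number of $(b_1,\ldots,b_m)$ for which a second short dual vector exists, so with probability $1+o(1)$ the first dual minimum satisfies $\lambda_1^*\asymp 1/p$. Banaszczyk's transference inequality (Lemma~\ref{lem:ba}) now forces the \emph{last} primal successive minimum $\lambda_{\binom{t+m}{m}}$ to be $O(p)$; hence $\cL$ has a full basis of vectors short enough to satisfy the Howgrave--Graham bound, and among these one can certainly select $m$ that are algebraically independent. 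That is the mechanism by which Assumption~\ref{ass:heuristic} is discharged --- through duality and transference, not through zeros of the primal LLL output.
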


 To prove  Theorem~\ref{thm:ApproxGCD-Rigor},  we first present the relevant background from~\cite{CH}. 
 In particular, see  Lemma~\ref{lem:approxgcd-alg} which describes Cohn and Heninger's~\cite{CH} conditional algorithm. 
We then establish our main tool,  Lemma~\ref{lem:dual}, which allows us to get an unconditional algorithm 
and hence derive Theorem~\ref{thm:ApproxGCD-Rigor} in Section~\ref{sec: ApproxGCD-Rigor}.

\subsection{Outline of the Cohn and Heninger method}
\label{sec:applications}
We concentrate on the case of the partial approximate common divisor problem. Given integers $a_0,a_1,\ldots,a_m$ we seek to calculate all $p,r_1,\ldots,r_m$ such that 
\begin{equation}
\label{eq:rpcond}
 a_0=pq, a_1=pq_1+r_1,\ldots,a_m=pq_m+r_m, \quad r_i\le X_i, \quad p\ge a_0^{\beta}.
\end{equation}
Assume $p,q$ are both prime. We take some parameters $t$ and $k$ to be determined later and for each $m$-tuple of integers $e=(e_1,\ldots,e_m)$ satisfying 
\begin{equation}
\label{eq:econd}
1\le e_1+\ldots+e_m\le t
\end{equation}
 we define the polynomial
\begin{equation}
\label{eq:fedef}
f_\vec{e}(\vec{x})=\prod_{i=1}^{m}(X_ix_i+a_i)^{e_i}a_0^{\ell},
\end{equation}
where $\vec{e}=(e_1,\ldots,e_m)$, $\vec{x}=(x_1,\ldots,x_m)$ and 
\begin{equation}
\label{eq:elldef}
\ell=\max{\{k-(e_1+\ldots+e_m),0\}}.
\end{equation}
Let $\cL$ denote the lattice formed by coefficients of the polynomials $f_\vec{e}$ with $e$ satisfying~\eqref{eq:econd}. In particular, we may consider $\cL$ as a subset of Euclidian space via a lexicographic ordering of coordinates. For an ordered tuple of real numbers 
$$\vec{y}=\{y_{i_1,\dots,i_m}\}_{i_1+\dots+i_m\le t},$$
we associate the polynomial
\begin{align}
\label{eq:Q123}
Q_{\vec{y}}(x_1,\dots,x_m)=\sum_{i_1+\dots+i_m\le t}y_{i_1,\dots,i_m}x_1^{i_1}\dots x_m^{i_m}.
\end{align}

If 
$$r_i\equiv a_i \mod{p}, \qquad 1\le i \le m,$$
then for each $e_1,\ldots,e_m$ satisfying~\eqref{eq:econd} we have 
$$f_\vec{e}\left(\frac{r_1}{X_1},\ldots,\frac{r_m}{X_m}\right)\equiv 0 \mod{p^k},$$
and hence for each point $\vec{y}\in \cL$ there exists a polynomial $Q_\vec{y}$ as above, such that 
$$Q_{\vec{y}}\left(\frac{r_1}{X_1},\ldots,\frac{r_m}{X_m}\right)\equiv 0 \mod{p^k},$$
whenever $r_1,\ldots,r_m$ satisfy~\eqref{eq:rpcond}. Note if $\vec{y}\in \cL$ then the polynomials  
$$Q_{\vec{y}}\left(\frac{x_1}{X_1},\ldots,\frac{x_m}{X_m}\right),$$
have integral coefficients and a straightforward calculation shows that
\begin{equation}
\label{eq:dimdetL}
\begin{split}
& \dim{\cL}=\binom{t+m}{m}, \\
&  \det{\cL}=(X_1\ldots X_m)^{\binom{t+m}{m}t/(m+1)}a_0^{\binom{k+m}{m}k/(m+1)}.
\end{split} 
\end{equation}

The next step in~\cite{CH} is to apply the LLL-algorithm of 
Lenstra, Lenstra
and  Lov{\'a}sz~\cite{LLL} (see also~\cite{Ngu,NgSt1,NgSt2,Reg}) 
 to $\cL$ which finds $m$ short, linearly independent lattice points. 
 Let $\RQ_1,\ldots,\RQ_m$ denote the polynomials corresponding (as in~\eqref{eq:Q123}) to these $m$ short linearly independent lattice points of $\cL$, so that
$$
\RQ_i\left(\frac{r_1}{X_1},\ldots,\frac{r_m}{X_m}\right)\equiv 0 \mod{p^k}, \quad 1\le i \le m,
$$
and for   $1\le i \le m$ we have 
$$
\left|\RQ_i\left(\frac{r_1}{X_1},\ldots,\frac{r_m}{X_m}\right)\right|\le (\dim{\cL})^{1/2}2^{\dim{\cL}/4}(\det{\cL})^{1/(\dim{\cL}+1-m)}.
$$
Using~\eqref{eq:dimdetL}, and assuming our parameters satisfy
$$
\(\dim{\cL}\)^{1/2}2^{\dim{\cL}/4}(\det{\cL})^{1/(\dim{\cL}+1-m)}<a_0^{k\beta},
$$
we see that if $a_0,r_1,\ldots,r_m$ satisfy~\eqref{eq:rpcond} for some $p\ge a_0^\beta$ then
\begin{equation}
\label{eq:Qieqn}
\RQ_i\left(\frac{r_1}{X_1},\ldots,\frac{r_m}{X_m}\right)=0, \quad 1\le i \le m.
\end{equation}
One then solves the system of polynomial equations~\eqref{eq:Qieqn} to obtain possible candidates for $r_1,\ldots,r_m$ from which solutions to the equations~\eqref{eq:rpcond} can be tested via the Euclidian algorithm. In order for this last stage to be computationally feasable, a heuristic assumption that the polynomials $\RQ_1,\ldots,\RQ_m$ are algebraically independent is used, since then Bezout's theorem implies the system~\eqref{eq:Qieqn} has at most $O(1)$ solutions. 

A careful analysis of the above argument results in the following conditional result which is essentially~\cite[Theorem~1]{CH} (and we present here in a self-contained form, suitable for 
our applications). 

\begin{assump}
\label{ass:heuristic}
The $m$ shortest points of the lattice $\cL$ correspond 
to algebraically independent polynomials. 
\end{assump}

Then by~\cite[Theorem~1]{CH} we have.

\begin{lemma}
\label{lem:approxgcd-alg}
Given positive integers $a_0,a_1,\ldots,a_m$ and real numbers $\beta,X_1,\ldots,X_m$ satisfying
$$
\beta \gg (\log{N})^{-1/2} \mand (X_1\ldots X_m)^{1/m}<a_0^{(1+o(1))\beta^{(m+1)/m}},
$$
there exists a deterministic algorithm which, provided that Assumption~\ref{ass:heuristic} holds, runs in polynomial time and finds all integers $r_1,\ldots,r_m$ satisfying
$$\gcd(a_0,a_1-r_1,\ldots,a_m-r_m)\ge a_0^{\beta} \mand |r_i| \le X_i, \ i =1, \ldots, m$$
\end{lemma}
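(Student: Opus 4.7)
The plan is to execute the outline already given in Section~\ref{sec:applications} and to show that, with the right choice of parameters $t$ and $k$, every step of the Cohn--Heninger procedure becomes both rigorous (conditional only on Assumption~\ref{ass:heuristic}) and polynomial time. Concretely, I would fix $k = \lceil 1/\beta^{1/m}\rceil$ (or a similar quantity behaving like $\beta^{-1/m}$) and then choose $t$ proportional to $k/\beta$, so that the ratio $t/(m+1)$ in the exponent of $X_1\ldots X_m$ in~\eqref{eq:dimdetL} and the ratio $k/(m+1)$ in the exponent of $a_0$ are balanced against the target $a_0^{k\beta}$. The lower bound $\beta \gg (\log N)^{-1/2}$ is exactly what is needed to ensure that $t$ and $k$ stay bounded by $O((\log a_0)^{1/2})$, so that $\dim \cL = \binom{t+m}{m}$ is polylogarithmic in $a_0$ and the lattice basis reduction step runs in polynomial time.

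Next I would verify the LLL inequality, namely that
\[
(\dim \cL)^{1/2} 2^{\dim \cL /4} (\det \cL)^{1/(\dim \cL + 1 - m)} < a_0^{k\beta}
\]
under the assumed size bound on $X_1\ldots X_m$. Using~\eqref{eq:dimdetL}, this reduces, after taking logarithms, to the asymptotic inequality
\[
\frac{t}{m+1} \log(X_1\ldots X_m) + \frac{k}{m+1} \log a_0 < k\beta \log a_0 - O(\dim \cL),
\]
where the $O(\dim \cL)$ absorbs the prefactor and the $2^{\dim \cL/4}$ term (finite because $\dim \cL = o(\log a_0)$ by the above choice of parameters). Solving this inequality as $t,k \to \infty$ with $t/k \to 1/\beta$ recovers precisely the threshold $(X_1\ldots X_m)^{1/m} < a_0^{(1+o(1))\beta^{(m+1)/m}}$ stated in the hypothesis.

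Once the LLL bound is satisfied, the polynomials $\RQ_1,\ldots,\RQ_m$ returned by the algorithm produce integers smaller in absolute value than $a_0^{k\beta} \le p^k$, so the congruence $\RQ_i(r_1/X_1,\ldots,r_m/X_m) \equiv 0 \pmod{p^k}$ upgrades to the exact equality~\eqref{eq:Qieqn}. Invoking Assumption~\ref{ass:heuristic}, the polynomials $\RQ_1,\ldots,\RQ_m$ are algebraically independent, and then B\'ezout's theorem bounds the number of common zeros by the product of their total degrees, which is $O(t^m) = (\log a_0)^{O(1)}$. All these common zeros can be enumerated in polynomial time by computing an elimination ideal of $\RQ_1,\ldots,\RQ_m$ via resultants or a Gr\"obner basis with respect to a lexicographic order; each candidate tuple $(r_1,\ldots,r_m)$ is then checked by one Euclidean-algorithm call against the system~\eqref{eq:rpcond}.

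The one step that is not completely routine is the bookkeeping in the previous paragraph: one must control the bit-size of the reduced basis vectors (polynomial in $\log a_0$) so that the Gr\"obner/resultant computation really runs in polynomial time, and one must verify that dropping the $o(1)$ correction in the hypothesis leaves enough slack to absorb the $2^{\dim \cL/4}$ and the $\binom{t+m}{m}^{1/2}$ factors coming from LLL. This is the main obstacle, and it is handled by insisting that $t,k \to \infty$ slowly (e.g.\ as $(\log a_0)^{1/2}$ in the worst case $\beta \asymp (\log N)^{-1/2}$), so that $\dim \cL = a_0^{o(1)}$ and every error term is subsumed into the $o(1)$ in the exponent $\beta^{(m+1)/m}(1+o(1))$. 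Since the whole argument outside of Assumption~\ref{ass:heuristic} is constructive and deterministic, this yields the claimed conditional algorithm.
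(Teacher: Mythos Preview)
The paper does not supply its own proof of this lemma: it is quoted directly as \cite[Theorem~1]{CH}, with the discussion in Section~\ref{sec:applications} serving only as an informal outline. Your proposal reproduces the Cohn--Heninger argument in spirit, but the parameter analysis contains a genuine error that prevents it from reaching the stated threshold.

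When you reduce the LLL condition to the displayed inequality
\[
\frac{t}{m+1}\log(X_1\ldots X_m)+\frac{k}{m+1}\log a_0<k\beta\log a_0-O(\dim\cL),
\]
you have silently dropped the factor $\binom{k+m}{m}\big/\binom{t+m}{m}\sim(k/t)^m$ that multiplies the $a_0$ term after dividing $\log\det\cL$ by $\dim\cL+1-m$. If one now follows your choice $t/k\to 1/\beta$, your own inequality yields $(X_1\ldots X_m)^{1/m}<a_0^{((m+1)\beta^2-\beta)/m}$, an exponent which is negative once $\beta<1/(m+1)$ and in any case is not $\beta^{(m+1)/m}$. The correct computation retains the $(k/t)^m$ factor, maximises $\rho\beta-\rho^{m+1}/(m+1)$ over $\rho=k/t$, and finds the optimum at $\rho=\beta^{1/m}$; it is this choice that produces the exponent $\beta^{(m+1)/m}$. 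With that correction (so $t\sim k\beta^{-1/m}$, and $k\to\infty$ slowly enough that $\dim\cL\sim t^m/m!=o(k\beta\log a_0)$, which the hypothesis $\beta\gg(\log a_0)^{-1/2}$ permits), the remainder of your plan --- LLL, the passage from congruence to equality, B\'ezout under Assumption~\ref{ass:heuristic}, and elimination --- is sound.
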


\subsection{Preliminary discussion}
In what follows, we show that Theorem~\ref{thm:approxgcdav} implies that heuristic 
Assumption~\ref{ass:heuristic} used in Lemma~\ref{lem:approxgcd-alg} holds with a high probability if $a_1,\ldots,a_m$ are selected uniformly at random from integers of bounded height.

Consider the lattice $\cL$ described in Section~\ref{sec:applications}. We   show that in case $\ell=1$, an LLL-reduced basis of the lattice $\cL$ described at the beginning of Section~\ref{sec:applications} behaves like a random lattice with high probability. An equivalent way to state this is that the first successive minima of $\cL$ is very small and all the remaining successive minima have about the same size.  Note that this provides a theoretical explanation for the phenomena observed in~\cite[page~9]{CH} that even when the polynomials obtained from the $m$ smallest lattice points of $\cL$ are algebraically dependent, it is still possible to choose $m$ `short enough' vectors which correspond to algebraically independent polynomials.

Our approach to estimate the successive minima of the lattices $\cL$ on average is to show that points in the dual lattice $\cL^{*}$ correspond to Bohr sets generated by polynomials. This will imply that on average the first successive minima of $\cL^{*}$ is large which combined with transference theorems (see~\cite{Ba}) implies that $\cL$ has many small linearly independent lattice points.

Recall that given a lattice $\cL\subseteq \R^{d}$, the dual lattice $\cL^*$  is defined by 
\begin{equation}
\label{eq:dualdef}
\cL^*=\{ x\in \R^{d} :~ \langle x,y\rangle\in \Z, \quad \text{for all $y\in \cL$} \}.
\end{equation}
Given a convex body $B$, the dual body $B^{*}$ is defined by 
\begin{equation}
\label{eq:dualBdef}
B^*=\{ x\in \R^{d} :~ |\langle x,y\rangle|\le 1 \  \text{for all $y\in B$} \}.
\end{equation}

For a proof of the following, see~\cite{Ba}.
\begin{lemma}
\label{lem:ba}
Let $\cL\subseteq \R^{d}$ be a lattice and $B$ a convex body. Let $\lambda_d$ denote the $d$-th successive minima of $\cL$ with respect to $B$ and $\lambda_1^{*}$ denote the first successive minima of $\cL^{*}$ with respect to $B^{*}$.  We have 
$$\lambda_d \lambda^{*}_1 \ll 1.$$
\end{lemma}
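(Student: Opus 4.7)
The plan is to establish Lemma~\ref{lem:ba} as a quantitative form of the Khinchine--Mahler transference theorem, combining Minkowski's second theorem (applied to both the primal pair $(\cL, B)$ and the dual pair $(\cL^*, B^*)$) with Mahler's lower bound on the volume of a centrally symmetric convex body and its polar, together with the classical dimension-count that produces a complementary lower bound. Throughout I treat the convex body $B$ as centrally symmetric, which costs at most a dimension-dependent constant factor in the general case via the replacement $B \rightsquigarrow B \cap (-B)$.

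First I would write out the full sequences of successive minima $\lambda_1 \le \ldots \le \lambda_d$ of $\cL$ relative to $B$, and $\lambda_1^* \le \ldots \le \lambda_d^*$ of $\cL^*$ relative to $B^*$. Minkowski's second theorem for the pair $(\cL, B)$ gives
\begin{equation*}
\lambda_1 \cdots \lambda_d \cdot \mathrm{vol}(B) \le 2^d \det(\cL),
\end{equation*}
with the analogous bound for $(\cL^*, B^*)$. Multiplying these two inequalities, and invoking the duality identity $\det(\cL)\det(\cL^*) = 1$ together with Mahler's bound $\mathrm{vol}(B)\mathrm{vol}(B^*) \ge 4^d/d!$, yields
\begin{equation*}
\prod_{i=1}^d \lambda_i \lambda_i^* \le d!.
\end{equation*}

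Next I would establish the elementary complementary lower bounds $\lambda_i \cdot \lambda_{d-i+1}^* \ge 1$ for each $i = 1, \ldots, d$. By definition there exist linearly independent $v_1, \ldots, v_i \in \lambda_i B \cap \cL$ and linearly independent $w_1, \ldots, w_{d-i+1} \in \lambda_{d-i+1}^* B^* \cap \cL^*$; since their combined count $i + (d - i + 1) = d+1$ exceeds $d$, a dimension argument forces some inner product $\langle v_j, w_k\rangle$ to be nonzero. By~\eqref{eq:dualdef} this value is an integer, while~\eqref{eq:dualBdef} forces $|\langle v_j, w_k\rangle| \le \lambda_i \lambda_{d-i+1}^*$, so that $\lambda_i \lambda_{d-i+1}^* \ge 1$.

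To finish, I would multiply the elementary lower bounds for the indices $j = 1, \ldots, d-1$ to obtain $(\lambda_1 \cdots \lambda_{d-1})(\lambda_2^* \cdots \lambda_d^*) \ge 1$, and divide the product inequality from the first step by this quantity; the factors telescope, leaving $\lambda_d \lambda_1^* \le d! \ll 1$, which is exactly the claim. The main obstacle in this programme is the Mahler volume inequality $\mathrm{vol}(B)\mathrm{vol}(B^*) \ge 4^d/d!$, whose sharp form is nontrivial and in the full Banaszczyk refinement replaces $d!$ by $d$; however, for the purely qualitative $\ll_d 1$ bound demanded here it is enough to use the much softer estimate $\mathrm{vol}(B)\mathrm{vol}(B^*) \gg_d 1$, which follows painlessly from John's ellipsoid theorem by sandwiching $B$ between concentric ellipsoids of bounded aspect ratio and reducing to the Euclidean ball.
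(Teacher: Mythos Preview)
Your argument is correct. The paper itself does not prove this lemma at all; it simply cites Banaszczyk~\cite{Ba} and moves on. So your self-contained proof via Minkowski's second theorem, the elementary transference lower bounds $\lambda_i\lambda_{d-i+1}^*\ge 1$, and a polar-volume lower bound is genuinely supplying something the paper omits. Banaszczyk's own method is quite different---it uses Gaussian measures on lattices and yields the sharper constant $\lambda_d\lambda_1^*\le d$---but since the lemma only asserts $\ll_d 1$, your more classical route through Minkowski and John is entirely sufficient and arguably more accessible.

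One small terminological slip: what you call ``Mahler's bound'' $\mathrm{vol}(B)\mathrm{vol}(B^*)\ge 4^d/d!$ is in fact the Mahler \emph{conjecture}; Mahler's proven inequality is weaker. You already anticipate and resolve this in your last paragraph by falling back to the qualitative $\mathrm{vol}(B)\mathrm{vol}(B^*)\gg_d 1$ via John's ellipsoid theorem, which is all that is needed, so this does not affect the validity of the argument---but you should adjust the wording accordingly.
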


\subsection{Connection to Bohr sets generated by polynomials}
\label{sec:bohrappl}
Returning to the lattices described in Section~\ref{sec:applications}, we first perform a linear change of variables and let $\cL_0$ denote the lattice generated by coefficients of the polynomials
\begin{equation}
\begin{split}
\label{eq:f-expanded}
f_\vec{e}&\left(\frac{x_1}{X_1},\ldots,\frac{x_m}{X_m}\right)\\
& \qquad =a_0^{\ell}\sum_{\substack{0\le j_i\le e_i \\ 1\le j \le m}}\binom{e_1,\ldots,e_m}{j_1,\ldots,j_m}a_1^{e_1-j_1}\ldots a_m^{e_m-j_m}x_1^{j_1}\ldots x_m^{j_m},
\end{split} 
\end{equation}
with $e_1,\dots,e_m$ satisfying 
$$1\le e_1+\ldots+e_m\le t,$$
and 
$$\binom{e_1,\ldots,e_m}{j_1,\ldots,j_m}=\binom{e_1}{j_1}\ldots \binom{e_m}{j_m}.$$

We use a natural correspondence between coefficients of $f_\vec{e}(\vec{x})$ and coordinates of 
points in $\R^{\binom{t+m}{m}}.$
In particular, we order the monomials occuring in~\eqref{eq:f-expanded} lexicographically. Note that $\dim{\cL_0}=\binom{t+m}{m}$ and with the above convention, each point $ \vec{x}\in \R^{\binom{t+m}{m}}$ has a representation 
$$ \vec{x}=\{x_{j_1,\ldots,j_m}\}_{j_1+\ldots+j_m\le t}.$$

Let $B$ denote the box 
$$
B=\left\{ \vec{x}=\{x_{j_1,\ldots,j_m}\}_{j_1+\ldots+j_m\le t} :~ |x_{j_1,\ldots,j_m}|\le \frac{1}{X_1^{j_1}\ldots X_m^{j_m}} \right\}.
$$
Note that the successive minima of $\cL_0$ with respect to $B$ equal the successive minima of $\cL$ with respect to the unit cube $[-1,1]^{\binom{t+m}{m}}$.

Our next result gives a correspondence between points of the dual lattice $\cL^{*}_0$ and Bohr sets generated by polynomials.  We establish a description of the dual $\cL_0^{*}$ in greater generality than needed for our purpose since we expect further applications of the ideas discussed below.

\begin{lemma}
\label{lem:dual}
With notation as above,  we have
$$
\vy=\{y_{j_1,\ldots,j_m}\}_{j_1+\ldots+j_m\le t}\in \cL_0^{*}
$$ 
if and only if there exists integers $k_{\gamma_1,\ldots,\gamma_m}$ such that 
\begin{align*} 
y_{j_1,\ldots,j_m}=\frac{1}{a_0^u} \sum_{\substack{\gamma_{\ell}\le j_{\ell}\\ 1 \le \ell \le m}} (-1)^{\sum_{i=1}^m (j_i - \gamma_i)} & a_0^{\min\left\{ u, \sum_{i=1}^m \gamma_i \right\}}\\
& k_{\gamma_1, \cdots, \gamma_m} \prod_{i=1}^{n} \binom{j_i}{\gamma_i} a_i^{(j_i - \gamma_i)}.
\end{align*}
\end{lemma}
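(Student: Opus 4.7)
The plan is to apply a change of basis that converts the generators of $\cL_0$ into a ``diagonal'' form in which the dual lattice becomes easy to describe. On the space of polynomials in $x_1,\ldots,x_m$ of total degree at most $t$, let $T$ denote the translation operator $TP(\vx) = P(\vx - \vec{a})$, and order the monomial basis by total degree (then lexicographically within each degree). In this basis, the matrix of $T$ has entries
\[
T_{\vec{j},\vec{e}} = \binom{\vec{e}}{\vec{j}}(-\vec{a})^{\vec{e}-\vec{j}},
\]
with the convention that $\binom{\vec{e}}{\vec{j}} = 0$ unless $\vec{j}\le\vec{e}$ componentwise. This matrix is integral, upper triangular with $1$'s on the diagonal, and therefore unimodular over $\Z$; its inverse is $T^{-1}P(\vx) = P(\vx+\vec{a})$. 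Reading off~\eqref{eq:f-expanded}, the coefficient vector of $f_{\vec{e}}(x_1/X_1,\ldots,x_m/X_m)$ equals $a_0^{\ell(\vec{e})}$ times the $\vec{e}$-th column of $T^{-1}$, where $\ell(\vec{e}) = \max\{u-|\vec{e}|,0\}$ and $u=k$ is the parameter from Section~\ref{sec:applications} (we also include $\vec{e}=\vec{0}$, corresponding to the constant polynomial $a_0^{k}$, so that $\cL_0$ has full rank).

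The dual condition $\vy\in\cL_0^{*}$ then becomes
\[
a_0^{\ell(\vec{e})}(T^{-T}\vy)_{\vec{e}} \in \Z \qquad\text{for every $\vec{e}$ with $0\le|\vec{e}|\le t$.}
\]
Introducing the free integer parameters $k_{\vec{\gamma}}:=a_0^{\ell(\vec{\gamma})}(T^{-T}\vy)_{\vec{\gamma}}$ and inverting gives
\[
\vy = T^{T}\bigl(k_{\vec{\gamma}}/a_0^{\ell(\vec{\gamma})}\bigr)_{\vec{\gamma}}.
\]
Since $(T^{T})_{\vec{j},\vec{\gamma}} = T_{\vec{\gamma},\vec{j}} = \binom{\vec{j}}{\vec{\gamma}}(-\vec{a})^{\vec{j}-\vec{\gamma}}$, expanding this product coordinate-wise and rewriting $1/a_0^{\ell(\vec{\gamma})} = a_0^{\min\{u,|\vec{\gamma}|\}}/a_0^{u}$ recovers the explicit formula stated in the lemma. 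Since both $T$ and the scaling $k_{\vec{\gamma}} \mapsto k_{\vec{\gamma}}/a_0^{\ell(\vec{\gamma})}$ are bijections with the stated integrality as their only constraint, this proves both directions of the ``if and only if''.

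The main work is conceptual rather than computational: once one sees that the translation $T$ simultaneously ``diagonalizes'' all the generators of $\cL_0$ (up to the scalars $a_0^{\ell(\vec{e})}$), the remaining argument is routine unwinding. The main points of care are to keep $T$ and its transpose $T^{T}$ straight, since the dual pairing is the coefficient inner product and so it is $T^{T}$, not $T$, that appears in the explicit formula, and to verify that $T$ is indeed $\Z$-unimodular so that no hidden denominators are introduced beyond the explicit powers of $a_0$.
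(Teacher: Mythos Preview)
Your argument is correct and takes a genuinely different route from the paper. The paper proceeds by a direct induction on $|\vec{j}|$: assuming the formula for all indices of smaller weight, it pairs $\vy$ with the generator $\bfell_{\beta_1,\ldots,\beta_m}$, solves for $y_{\beta_1,\ldots,\beta_m}$, substitutes the induction hypothesis, and simplifies the resulting double sum via the identity $\binom{\beta}{\alpha}\binom{\alpha}{\gamma}=\binom{\beta}{\gamma}\binom{\beta-\gamma}{\beta-\alpha}$ together with the binomial theorem; it also splits into the cases $u\ge|\vec{\beta}|$ and $u<|\vec{\beta}|$ and only writes out the first in detail, and the converse implication is left implicit. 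Your approach instead recognises the structural fact that the basis matrix of $\cL_0$ factors as $T^{-1}D$ with $T$ (the translation by $\vec{a}$) a $\Z$-unimodular triangular matrix and $D$ diagonal, so that the dual is simply $T^{T}D^{-1}\Z^{N}$; the explicit formula then drops out of reading off the entries of $T^{T}$. This is shorter, handles both directions of the ``if and only if'' and both ranges of $|\vec{\gamma}|$ uniformly, and makes transparent why the binomial manipulations in the paper's induction must collapse: they are re-deriving, one coordinate at a time, the inversion $T\cdot T^{-1}=I$. The paper's computation, on the other hand, requires no recognition of the underlying change of basis and could in principle be adapted to generators that are not exact translates of monomials.
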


\begin{proof}
We   proceed by induction on $j_1+\ldots+j_m$ and formulate our induction hypothesis as follows: Let $\vy=\{y_{j_1,\ldots,j_m}\}_{j_1+\ldots+j_m\le t}\in \cL_0^{*}$. Suppose $t\ge 0$. There exists integers $k_{j_1,\ldots,j_m}$ such that for all $\beta_1,\ldots,\beta_m$ satisfying 
$$
\sum_{i=1}^{m}\beta_i \le t,
$$
we have 
\begin{equation}
\label{eq:yyy1}
\begin{split} 
y_{\beta_1,\ldots,\beta_m}=\frac{1}{a_0^u} \sum_{\substack{\gamma_{\ell}\le \beta_{\ell} \\ 1 \le \ell \le m}}  (-1)^{\sum_{i=1}^n (j_i - \beta_i)} & a_0^{\min\left\{ u, \sum_{i=1}^m \gamma_i \right\}}\\
& k_{\gamma_1, \cdots, \gamma_m} \prod_{i=1}^{m} \binom{\beta_i}{\gamma_i} a_i^{(\beta_i - \gamma_i)}.
\end{split}
\end{equation}
 We  consider the following two cases separately 
\begin{equation}
\label{eq:dualcase1}
u \ge \beta
\end{equation}
and
\begin{equation}
\label{eq:dualcase2}
u < \beta.
\end{equation}
Note that the base casis $t=0$ is a direct consequence of the definition of lattice dual. In particular, since the point 
$\widetilde{\vec{x}}=\{ x_{j_1,\ldots,j_m}\}_{j_1+\ldots+j_m\le t}$ with coordiates 
satisfying 
$$
x_{j_1,\ldots,j_m}=\begin{cases}
a_0^{u} \quad \text{if} \quad  j_1+\ldots+j_m=0, \\ 0 \quad \text{otherwise,}
\end{cases}
$$
belongs to $\cL_0$, we see that there exists $k_{0,\ldots,0}\in \Z$ such that 
$$y_{0,\ldots,0}=\frac{k_{0,\ldots,0}}{a_0^{u}}.$$

We next establish some notation which  is used throughout the inductive step. Let
\begin{equation}
\label{eq:Idef}
\begin{split} 
& \cI_{\alpha_1, \cdots, \alpha_m} = \left\{ (\gamma_1, \cdots, \gamma_m):~ 0 \le 
\gamma_i \le \alpha_i, \ 1 \le i \le m\right\},\\
& \cI^*_{\alpha_1, \cdots, \alpha_m} = \cI_{\alpha_1, \cdots, \alpha_m} \backslash \{(\alpha_1, \cdots, \alpha_m)\}
\end{split} 
\end{equation}
and let $\bfell_{\alpha_1,\ldots,\alpha_m}$ denote the vector corresponding to the coefficients of the polynomials
\begin{equation}
\begin{split} 
\label{eq:lcoefficients}
& a_0^{\max\left\{0, u-\sum_{i=0}^m \alpha_i\right\}} (a_1 + x_1)^{\alpha_1} \cdots (a_n + x_m)^{\alpha_m} \\ 
&\qquad \quad =a_0^{u - \sum_{i=0}^m \alpha_i} \sum_{(\gamma_1, \cdots, \gamma_m) \in \cI_{\alpha_1, \cdots, \alpha_m}} \prod_{i=1}^{m} \binom{\alpha_i}{\gamma_i} a_i^{\alpha_i-\gamma_i} x_i^{\gamma_i}. 
\end{split}
\end{equation}
It is clear that $\mathcal{L}_0$ is equivalent to the lattice generated by vectors $\bfell_{\alpha_1, \cdots, \alpha_m}$ with $\alpha_1,\dots,\alpha_m$ satisfying $\alpha_1+\ldots+\alpha_m\le t$ and hence 
$$\vy\in \cL_0^{*} \iff \langle \vy,\bfell_{\alpha_1,\ldots,\alpha_m}\rangle \in \Z \quad \text{for all} \quad \alpha_1+\ldots+\alpha_m\le t.$$

First consider case~\eqref{eq:dualcase1}.
If 
$$u \ge \sum_{i=1}^m \beta_i,
$$ 
then the expression~\eqref{eq:yyy1} becomes 
$$
y_{\beta_1,\ldots,\beta_m}=\frac{1}{a_0^u}  \sum_{\substack{\gamma_{\ell}\le \beta_{\ell} \\ 1 \le \ell \le m}}  (-1)^{\sum_{i=1}^m (j_i - \beta_i)} a_0^{\sum_{i=1}^m \gamma_i } k_{\gamma_1, \cdots, \gamma_m} \prod_{i=1}^{m} \binom{\beta_i}{\gamma_i} a_i^{(\beta_i - \gamma_i)}
$$
where $k_{\gamma_1, \cdots, \gamma_m} \in \mathbb{Z}$.

By our inductive hypothesis, there exists $k_{\gamma_1, \cdots, \gamma_m} \in \mathbb{Z}$ such that for all $(\alpha_1, \cdots, \alpha_m) \in \cI^*_{\beta_1, \cdots, \beta_m}$ we have 
\begin{align*}
y_{\alpha_1, \cdots, \alpha_m} = \frac{1}{a_0^u} \sum_{(\gamma_1, \cdots, \gamma_m) \in \cI_{\alpha_1, \cdots, \alpha_m}} &(-1)^{\sum_{i=1}^m (\alpha_i - \gamma_i)}\\
&a_0^{\sum_{i=1}^m \gamma_i} k_{\gamma_1, \cdots, \gamma_m} \prod_{i=1}^{m} \binom{\alpha_i}{\gamma_i} a_i^{(\alpha_i - \gamma_i)}.
\end{align*}

Since $\vy=\{y_{j_1,\ldots,j_m}\}_{j_1+\ldots+j_m\le t}\in \cL_0^{*}$, we have 
\begin{equation}
\label{eq:ylbeta}
\langle y,\bfell_{\beta_1,\ldots,\beta_m}\rangle =k_{\beta_1,\ldots,\beta_m},
\end{equation}
for some $k_{\beta_1,\ldots,\beta_m}\in \Z$. Let $x_{\alpha_1,\ldots,\alpha_m}$ denote the coordinates of the vector $\bfell_{\beta_1,\ldots,\beta_m}$ when ordered lexicographically, so that~\eqref{eq:ylbeta} implies
\begin{equation}\label{ybeta}
\begin{split}
y_{\beta_1, \cdots, \beta_{m}} = \frac{1}{x_{\beta_1, \cdots, \beta_{m}}}
\Bigl(&k_{\beta_1, \cdots, \beta_{m}} \\
& \quad - \sum_{(\alpha_1, \cdots, \alpha_{m}) \in \cI^*_{\beta_1, \cdots, \beta_{m}}}{x_{\alpha_1, \cdots, \alpha_{m}}} y_{\alpha_1, \cdots, \alpha_{m}}\Bigr).
\end{split}
\end{equation}
By~\eqref{eq:lcoefficients}
$$
x_{\beta_1, \cdots, \beta_{m}} = a_0^{u-\sum_{i=1}^{m} \beta_i}
$$
and
$$
x_{\alpha_1, \cdots, \alpha_{m}} = a_0^{u-\sum_{i=1}^{m} \beta_i} \prod_{i=1}^{m} \binom{\beta_i}{\alpha_i} a_i^{\beta_i-\alpha_i}
$$

Substituting the above into~\eqref{ybeta} and using our inductive hypothesis, we see that
\begin{equation}
\label{eq:y1bths3}
y_{\beta_1, \cdots, \beta_m} = \frac{a_0^{\beta_1 + \cdots + \beta_m}}{a_0^u} k_{\beta_1, \cdots, \beta_m} - S.
\end{equation}
where 
\begin{align*}
S& = \sum_{(\alpha_1, \cdots, \alpha_{m}) \in \cI^*_{\beta_1, \cdots, \beta_{m}}} \prod_{i=1}^{m} \binom{\beta_i}{\alpha_i} a_i^{\beta_i-\alpha_i} \frac{1}{a_0^u}\\
&\qquad  \qquad \qquad \times \sum_{(\gamma_1, \cdots, \gamma_{m}) \in \cI_{\alpha_1, \cdots, \alpha_{m}}} (-1)^{\sum_{j=1}^m (\alpha_j - \gamma_j)} a_0^{\sum_{r=1}^m \gamma_r}\\
&\qquad \qquad \qquad \qquad \qquad  \qquad \qquad \qquad k_{\gamma_1, \cdots, \gamma_m} \prod_{\ell=1}^{m} \binom{\alpha_\ell}{\gamma_\ell} a_\ell^{(\alpha_\ell - \gamma_\ell)}.
\end{align*}

Adding and subtracting the term corresponding to $(\alpha_1, \cdots, \alpha_m) = (\beta_1, \cdots, \beta_m)$ in the outermost summation on the right hand side of the above results in
\begin{equation}\label{sybetas}
\begin{split}
S = T - \sum_{(\gamma_1, \cdots, \gamma_m) \in \cI_{\beta_1, \cdots, \beta_m}} (-1)^{\sum_{j=1}^m (\beta_j - \gamma_j)} & a_0^{\sum_{r=1}^m \gamma_r - u} \\
 k_{\gamma_1, \cdots, \gamma_m}& \prod_{\ell=1}^m \binom{\beta_\ell}{\gamma_\ell} a_\ell^{\beta_\ell - \gamma_\ell},
\end{split} 
\end{equation}
where
\begin{align*}
T&  = \sum_{(\alpha_1, \cdots, \alpha_m) \in \cI_{\beta_1, \cdots, \beta_m}} \prod_{i=1}^m \binom{\beta_i}{\alpha_i} a_i^{\beta_i - \alpha_i} \frac{1}{a_0^u}\\
&\qquad  \qquad \qquad 
\times \sum_{(\gamma_1, \cdots, \gamma_m) \in \cI_{\alpha_1, \cdots, \alpha_m}} (-1)^{\sum_{j=1}^m (\alpha_j - \gamma_j)} a_0^{\sum_{r=1}^m \gamma_r} \\
&\qquad \qquad \qquad \qquad \qquad \qquad \qquad 
\times k_{\gamma_1, \cdots, \gamma_m} \prod_{\ell=1}^m \binom{\alpha_\ell}{\gamma_\ell} a_i^{(\alpha_\ell - \gamma_\ell)}.
\end{align*}
Interchanging the order of summation in $T$, we get:
\begin{align*}
T & = \sum_{(\gamma_1, \cdots, \gamma_m) \in \cI_{\beta_1, \cdots, \beta_m}} (-1)^{\sum_{j=1}^m \gamma_j} \left(\prod_{\ell=1}^{m}a_i^{\beta_\ell - \gamma_\ell}\right) a_0^{\sum_{r=1}^m \gamma_r-u} k_{\gamma_1, \cdots, \gamma_m}\\
& \qquad \qquad \qquad \qquad \quad \times \sum_{\substack{(\alpha_1,\dots,\alpha_m) \\ (\alpha_1, \cdots, \alpha_m) \in \cI_{\beta_1, \cdots, \beta_m} \\ (\gamma_1, \cdots, \gamma_m) \in I_{\alpha_1, \cdots, \alpha_m}}} (-1)^{\sum_{j=1}^m \alpha_i} \prod_{\ell=1}^m \binom{\beta_\ell}{\alpha_\ell}\binom{\alpha_\ell}{\gamma_\ell}.
\end{align*}
Noting the identity:
$$
\binom{\beta}{\alpha} \binom{\alpha}{\gamma} = \binom{\beta}{\gamma} \binom{\beta - \gamma}{\beta - \alpha},
$$
we can further simplify $T$ as follows:
\begin{align*}
T & = \sum_{(\gamma_1, \cdots, \gamma_m) \in \cI_{\beta_1, \cdots, \beta_m}} (-1)^{\sum_{j=1}^m \gamma_j} \left(\prod_{\ell=1}^{m}\binom{\beta_\ell}{\gamma_\ell}a_i^{\beta_\ell - \gamma_\ell}\right) a_0^{\sum_{r=1}^m \gamma_r-u} k_{\gamma_1, \cdots, \gamma_m}\\
& \qquad \qquad \qquad  \times \sum_{\substack{(\alpha_1,\dots,\alpha_m) \\ (\alpha_1, \cdots, \alpha_m) \in \cI_{\beta_1, \cdots, \beta_m} \\ (\gamma_1, \cdots, \gamma_m) \in I_{\alpha_1, \cdots, \alpha_m}}} (-1)^{\sum_{j=1}^m \alpha_i} \prod_{\ell=1}^m \binom{\beta_\ell-\gamma_\ell}{\beta_{\ell}-\alpha_{\ell}}.
\end{align*}

Recalling~\eqref{eq:Idef}, the summation conditions on the innermost sum over $(\alpha_1,\dots,\alpha_m)$ is equivalent to
$$\gamma_\ell\le \alpha_\ell\le \beta_\ell,  \quad 1\le \ell \le m$$
and hence
\begin{equation}
\begin{split}
\label{eq:T11}
T & = \sum_{(\gamma_1, \cdots, \gamma_m) \in \cI_{\beta_1, \cdots, \beta_m}} (-1)^{\sum_{j=1}^m \gamma_j} \left(\prod_{\ell=1}^{m}\binom{\beta_\ell}{\gamma_\ell}a_i^{\beta_\ell - \gamma_\ell}\right) a_0^{\sum_{r=1}^m \gamma_r-u}\\
& \qquad  \qquad  \qquad  \quad \times   k_{\gamma_1, \cdots, \gamma_m} \prod_{\ell=1}^{m}\sum_{\gamma_{\ell}\le \alpha_{\ell}\le \beta_{\ell}} (-1)^{\sum_{j=1}^m \alpha_i} \binom{\beta_\ell-\gamma_\ell}{\beta_{\ell}-\alpha_{\ell}}. \end{split}
\end{equation} 
If $(\gamma_1,\ldots,\gamma_m)\neq (\beta_1,\ldots,\beta_m)$ then there exists some $j$ such that 
$$\gamma_j<\beta_j.$$ 
Hence by the binomial theorem 
$$\sum_{\gamma_j \le \alpha_j \le \beta_j} (-1)^{\sum_{i=1}^m \alpha_i} \prod_{i=1}^m \binom{\beta_i - \gamma_i}{\beta_i - \alpha_i}=0.$$
Therefore, this implies that the only term remaining in summation over $(\gamma_1,\dots,\gamma_m)$ in~\eqref{eq:T11} corresponds to $(\gamma_1,\dots,\gamma_m)=(\beta_1,\dots,\beta_m)$ and hence
$$T=a_0^{\sum_{i=1}^{m}\beta_i-u}k_{\beta_1,\ldots,\beta_m}.$$
Combining the above with~\eqref{eq:y1bths3} and~\eqref{sybetas} completes the proof of the inductive step when~\eqref{eq:dualcase1} holds. The case~\eqref{eq:dualcase2} is similar with a slight modifications to the vectors~\eqref{eq:lcoefficients}. 
\end{proof} 

\subsection{Proof of Theorem~\ref{thm:ApproxGCD-Rigor}}
\label{sec: ApproxGCD-Rigor}   
We next explain how Lemma~\ref{lem:dual} may be applied to remove Assumption~\eqref{ass:heuristic} in Lemma~\ref{lem:approxgcd-alg}. 

Recall that $a_0=pq$ for some primes $p,q$ and assume that $a_1,\ldots,a_m$ satisfy
\begin{equation}
\label{eq:abjj}
a_j=pb_j+r_j, \quad 1\le j \le m, \quad |r_j|\le X_j.
\end{equation}

We   show that if the tuple $(b_1,\ldots,b_m)$ is chosen uniformly at random from the cube $[1,q]^{m}$ then with probability $1+o(1)$, one may remove heuristic Assumption~\ref{ass:heuristic} provided there exists $H=o(p^{1-1/(m+1)})$ such that 
$$X_1=\dots=X_m=H.$$

Taking $u=1$ in Lemma~\ref{lem:dual}, we see that each point 
$$
y=\{y_{j_1,\ldots,j_m}\}_{j_1+\ldots+j_m\le t}\in \cL_0^{*}
$$ 
satisfies 
\begin{equation}
\label{eq:yiii}
\|y_{i_1,\ldots,i_m}\|=\left\|\frac{a_1^{i_1}\ldots a_m^{i_m}n}{a_0}\right\|,
\end{equation}
for some $n\in \Z$. 
Note that 
$$
\left\|\frac{r_1^{i_1}\ldots r_m^{i_m}}{p}\right\|=\left\|\frac{a_1^{i_1}\ldots a_m^{i_m}q}{a_0}\right\|
$$
and hence from Lemma~\ref{lem:dual} there exists some 
$$\widetilde y=\{\widetilde y_{i_1,\ldots,i_m}\}_{i_1+\ldots+i_m\le t}\in \cL_0^{*}$$
such that 
\begin{equation}
\label{eq:tildey}
\widetilde y_{i_1,\ldots,i_m}=\left\|\frac{r_1^{i_1}\ldots r_m^{i_m}}{p}\right\|.
\end{equation}
In particular 
$$
\cL_0^{*}\cap \frac{c_0}{p}B^{*}\neq \{0\}, 
$$
which implies 
$$\lambda_1^{*}\le \frac{c_0}{p}$$
for some absolute constant $c_0$. 

We next show that with probability $1+o(1)$, the point~\eqref{eq:tildey} is the shortest lattice point of $\cL_0^{*}$. This will imply there exists constants $c_0,c_1$ such that with probability $1+o(1)$
$$\frac{c_1}{p}\le \lambda_1^{*}\le \frac{c_0}{p}.$$

Take $y=\{y_{i_1,\ldots,i_m}\}_{i_1+\ldots+i_m\le t}$ as in~\eqref{eq:yiii} and subtract off the closest multiple of $\widetilde y$ defined as in~\eqref{eq:tildey}. The resulting point 
$$
z=\{z_{i_1,\ldots,i_m}\}_{i_1+\ldots+i_m\le t}
$$ 
has coordinates given by
$$z_{i_1,\ldots,i_m}=\left\| \frac{n((b_1p+r_1)^{i_1}\ldots (b_m p+r_m)^{i_m}-r_1^{i_1}\ldots r_m^{i_m})}{a_0}\right\|.$$
Let $T$ count the number of $b_1,\ldots,b_m\le q$ such that there exists $n\in \Z$ with 
\begin{equation}
\label{eq:1}
\begin{split}
\left\|\frac{n((b_1p+r_1)^{i_1}\ldots (b_mp+r_m)^{i_m}-r_1^{i_1}\ldots r_m^{i_m})}{a_0}\right\|\qquad &
\\
\le \frac{CX_1^{i_1}\ldots X_m^{i_m}}{p}&. 
\end{split}
\end{equation}
If we can show that  
\begin{equation}
\label{eq:TT}
T=o(q^{m})
\end{equation} then it  follows that for some absolute constant $c$ we have $\lambda_1^{*}\ge \frac{c}{p}$ with probability $1+o(1)$. Hence from Lemma~\ref{lem:ba} 
\begin{equation}
\label{eq:lbin}
\lambda_{\binom{t+m}{m}}=O(p)
\end{equation} with probability $1+o(1)$. If $a_0,\ldots,a_{m}$ is such that the lattice generated by polynomials~\eqref{eq:fedef} satisfies~\eqref{eq:lbin} then for some absolute constant $C_0$, it is easy to see one may select $m$ lattice points in the intersection 
$$
\cL\cap C_0 pB
$$
which correspond to algebraically independent polynomials, thus removing the heuristic Assumption~\ref{ass:heuristic}.

It remains to establish~\eqref{eq:TT}. Recall our assumption that there exists $H$ satisfying
$$H=X_1=X_2=\ldots=X_m.$$
 Recalling the definition of $\cU^{(m)}_{q}(\vec{X})$ just below~\eqref{eq:fdef}, the fact that $a_0=pq$ and~\eqref{eq:1}, we see that 
$$T=\cU^{(m)}_{q}(\vec{X}).$$
with parameter $\ell=p$ in~\eqref{eq:fdef}. By Theorem~\ref{thm:approxgcd}, 
$$
T\le \left( \frac{q^2 H^6}{p^3}+\frac{qH^2}{p}\right)\left(\frac{qH}{p}\right)^{m-1}\exp{\left(O\left(\frac{\log{q}}{\log\log{q}}\right)\right)}.
$$ 
Hence if
$$
H=O\(p^{1-1/(m+1)-\varepsilon}\)
$$
then~\eqref{eq:TT} is satisfied. This establishes Theorem~\ref{thm:ApproxGCD-Rigor}.


\section{Preliminaries} 

\subsection{Background on character sums}
 We refer to~\cite[Chapter~3]{IwKow} for a 
background on multiplicative characters.
Let $\cX_q$ denote the set of multiplicative characters 
modulo $q$ and let $\chi_0$ denote the principal character.
Denote by $\cX_q^* = \cX_q \setminus \{\chi_0\}$ the set of
non principal characters.


We   use the following bound on moments of character sums due to Ayyad, Cochrane and Zheng~\cite{ACZ}.

\begin{lemma}
\label{lem:CharSum4}
For any integer $H<q$  we have
$$\sum_{\chi \in \cX_q^*}\left|\sum_{1\le y \le H}\chi(y) \right|^{4}
\ll q H^{2}(\log q)^{2}.$$
\end{lemma}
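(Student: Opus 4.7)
The strategy is to transform the fourth moment into a multiplicative energy count via orthogonality of multiplicative characters, and then invoke the classical modular-hyperbola estimate of Ayyad--Cochrane--Zheng.

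First I would expand $|S_\chi|^4 = S_\chi^2\, \overline{S_\chi}^2$, where $S_\chi = \sum_{1 \le y \le H} \chi(y)$, and swap the order of summation. Since $H < q$ every $y \in [1,H]$ is coprime to $q$, so orthogonality of multiplicative characters modulo $q$ yields
$$
\sum_{\chi \in \cX_q} |S_\chi|^4 = (q-1)\, N,
$$
where
$$
N = \#\bigl\{ (y_1,y_2,y_3,y_4) \in [1,H]^4 :~ y_1 y_2 \equiv y_3 y_4 \pmod{q} \bigr\}.
$$
Subtracting the principal-character contribution $|S_{\chi_0}|^4 = H^4$ reduces the lemma to the estimate
$$
N = \frac{H^4}{q} + O\bigl(H^2 \log^2 q\bigr).
$$

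To bound $N$, I would write $y_1 y_2 - y_3 y_4 = kq$ for an integer $k$ with $|k| < H^2/q$, and split the count into the diagonal piece $k = 0$ and the off-diagonal. The diagonal equals $\sum_{n \le H^2} \tau_H(n)^2$, where $\tau_H(n)$ counts factorisations $n = ab$ with $1 \le a,b \le H$; a standard divisor-sum estimate gives an $O(H^2 \log^2 H)$ bound. For each nonzero $k$, counting pairs $(y_1,y_2), (y_3,y_4) \in [1,H]^2$ on the shifted hyperbola $y_1 y_2 - y_3 y_4 = kq$ via a short completion/Fourier argument produces the expected main term $H^4/q$ after summation over $k$, together with an error that can be absorbed into the diagonal contribution.

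The main obstacle is the off-diagonal estimate: the trivial bound $N \le H^4$ is much too weak, and extracting the correct main term $H^4/q$ with an error of only $O(H^2 \log^2 q)$ requires the uniform modular-hyperbola bounds of Ayyad--Cochrane--Zheng~\cite{ACZ}, which I would cite rather than reprove. Once the estimate on $N$ is in hand, substitution into
$$
\sum_{\chi \in \cX_q^*} |S_\chi|^4 = (q-1) N - H^4 = -\frac{H^4}{q} + O\bigl(q H^2 \log^2 q\bigr) \ll qH^2 \log^2 q
$$
completes the proof.
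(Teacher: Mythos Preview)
The paper gives no proof of this lemma; it simply quotes it as a result of Ayyad, Cochrane and Zheng~\cite{ACZ}. Your sketch is correct and amounts to the same thing: the orthogonality reduction you describe is precisely how~\cite{ACZ} relate the fourth moment to the count $N$ of solutions to $y_1y_2\equiv y_3y_4\pmod q$, and since you ultimately cite~\cite{ACZ} for the estimate on $N$, you are in effect citing the lemma directly.
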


\subsection{Some bounds on arithmetic functions}

In this section we collect some well known bounds on arithmetic functions.  Let as usual  $\varphi(k)$, $\tau(k)$,  $\omega(k)$ denote 
the Euler function, the 
number of positive integer divisors and  the 
number of prime divisors   of  a positive integer $k$, respectively. We also use $\zeta(s)$ to denote the Riemann zeta-function.

Clearly the trivial inequality $\omega(k)! \le k$ and the Stirling 
formula imply that 
\begin{equation}
\label{eq:omega}
\omega(k) \ll \frac{\log k}{\log \log k}, \qquad k \ge 3.
\end{equation}
We also note the corresponding bound for $\tau(k)$, see,
 for example,~\cite[Theorem~317]{HardyWright},
\begin{equation}
\label{eq:tau}
\tau(k)\le \exp\left({O\left(\frac{\log{k}}{\log\log{k}}\right)}\right).
\end{equation}

We recall the following well-know elementary bound, see~\cite{Sita} for a much more precise result.

\begin{lemma}
\label{lem:sumphi} We have
$$
\sum_{1\le z \le Z} \frac{z}{\varphi(z)} =  \frac{315\,\zeta(3)}{2\pi^4} Z + O(\log Z). 
$$
\end{lemma}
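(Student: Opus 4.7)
My plan is to invoke the classical multiplicative identity
\[
\frac{n}{\varphi(n)}=\prod_{p\mid n}\left(1+\frac{1}{p-1}\right)=\sum_{d\mid n}\frac{\mu^{2}(d)}{\varphi(d)},
\]
valid for every positive integer $n$, and then swap the order of summation. This reduces the task to analysing
\[
\sum_{1\le z\le Z}\frac{z}{\varphi(z)}=\sum_{d\le Z}\frac{\mu^{2}(d)}{\varphi(d)}\left\lfloor\frac{Z}{d}\right\rfloor.
\]

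Next I would replace $\lfloor Z/d\rfloor$ by $Z/d$, at the cost of an error of $O\bigl(\sum_{d\le Z}\mu^{2}(d)/\varphi(d)\bigr)\ll\log Z$, using the standard bound $\sum_{d\le Z}1/\varphi(d)\ll\log Z$. The remaining main term is $Z\sum_{d\le Z}\mu^{2}(d)/(d\varphi(d))$, and I would extend this to the full series over all $d\ge 1$. Since on squarefree $d$ one has $d/\varphi(d)\ll\log\log(d+2)$, it follows that $\mu^{2}(d)/(d\varphi(d))\ll\log\log(d+2)/d^{2}$, so that the tail contributes
\[
Z\sum_{d>Z}\frac{\mu^{2}(d)}{d\varphi(d)}\ll\log\log Z,
\]
which is absorbed into the announced $O(\log Z)$ error.

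It then remains to identify the constant
\[
C=\sum_{d=1}^{\infty}\frac{\mu^{2}(d)}{d\varphi(d)}=\prod_{p}\left(1+\frac{1}{p(p-1)}\right).
\]
The decisive algebraic step is to rewrite each Euler factor as
\[
1+\frac{1}{p(p-1)}=\frac{p^{3}+1}{p(p^{2}-1)}=\frac{1-p^{-6}}{(1-p^{-2})(1-p^{-3})},
\]
which collapses the product to $C=\zeta(2)\zeta(3)/\zeta(6)$. Substituting the closed forms $\zeta(2)=\pi^{2}/6$ and $\zeta(6)=\pi^{6}/945$ gives $\zeta(2)/\zeta(6)=315/(2\pi^{4})$, hence $C=315\,\zeta(3)/(2\pi^{4})$, in agreement with the claim.

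Every step is elementary, so I do not anticipate any genuine obstacle. The only moment requiring a little care is the Euler product evaluation: the specific constant $315\,\zeta(3)/(2\pi^{4})$ emerges precisely through the algebraic identity that rewrites $1+1/(p(p-1))$ as a ratio of $(1-p^{-k})$ factors, and it is this manipulation that brings the values of $\zeta$ at small integer arguments into play. Everything else is routine bookkeeping of error terms against the target size $\log Z$.
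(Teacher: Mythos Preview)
Your argument is correct and is the standard elementary derivation of this classical formula. The paper does not actually supply a proof here: it records the lemma as well known and refers to~\cite{Sita} for a much more precise version of the error term. Your route via the convolution identity $n/\varphi(n)=\sum_{d\mid n}\mu^{2}(d)/\varphi(d)$, the swap of summation, the replacement of $\lfloor Z/d\rfloor$ by $Z/d$ (with error controlled by Landau's bound $\sum_{d\le Z}1/\varphi(d)\ll\log Z$), and the Euler-product evaluation $\prod_p\bigl(1+1/(p(p-1))\bigr)=\zeta(2)\zeta(3)/\zeta(6)$ is precisely the textbook proof, and it delivers the stated $O(\log Z)$ error. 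So you have supplied what the paper chose to outsource to a reference; nothing further is needed.
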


Our next result follows immediately from a stronger
and much more general estimate of Shiu~\cite[Theorem~2]{Shiu} (taken with
$r = \lambda = 1$ and
$x=y$),
which in turn is a very special case of~\cite[Theorem~1]{Shiu};
even more general results are given by Nair and Tenenbaum~\cite{NaTe}.
As usual, we use $\varphi(k)$ to denote the Euler function. 

\begin{lemma}\label{lem:tau AP}
For any fixed real  $\varepsilon > 0$, and integers $u,v,Z$ satisfying $Z \ge u^{1+\varepsilon}$
 $\gcd(u,v) = 1$, we have
$$
\sum_{\substack{z \le Z \\  z \equiv v \mod  u}} \omega(z)
 \ll \frac{Z}{\varphi(u)} \log \log Z,
$$
where the implied constant depends only on $\varepsilon$.
\end{lemma}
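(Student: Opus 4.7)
I would prove Lemma~\ref{lem:tau AP} directly by exchanging the order of summation. Writing $\omega(z) = \sum_{p \mid z} 1$ and swapping,
$$
\sum_{\substack{z \le Z \\ z \equiv v \mod u}} \omega(z)
=
\sum_{p} N(p),
$$
where $N(p) = \#\{z \le Z :~ z \equiv v \mod u, \ p \mid z\}$. If $p \mid u$, the conditions $p \mid z$ and $z \equiv v \mod u$ force $p \mid v$, contradicting $\gcd(u,v)=1$, so such primes contribute nothing; this is the unique place where the coprimality hypothesis enters. For $p \nmid u$, the Chinese Remainder Theorem fuses the two congruences into a single residue class modulo $up$, giving $N(p) = Z/(up) + O(1)$.

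The delicate point is that summing the $O(1)$ error over all primes $p \le Z$ would produce $O(\pi(Z))$, which exceeds the target $(Z/\varphi(u))\log\log Z$ once $u$ approaches $Z^{1/(1+\varepsilon)}$. To sidestep this I would introduce a cutoff $T = Z^{\delta}$ with $\delta = \varepsilon/(2(1+\varepsilon))$ and split $\omega(z)$ into the contribution from primes $\le T$ and the contribution from primes $> T$. The latter piece is uniformly bounded by $\log Z / \log T = 2(1+\varepsilon)/\varepsilon = O(1)$, since any $r$ distinct primes exceeding $T$ whose product divides $z \le Z$ must satisfy $T^r \le Z$; summed over the arithmetic progression this contributes $O(Z/u) \le O(Z/\varphi(u))$, which is absorbed into the main term.

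For primes $p \le T$, Mertens' theorem gives $\sum_{p \le T,\, p \nmid u} 1/p = O(\log\log Z)$, and multiplying by $Z/u$ together with $\varphi(u) \le u$ produces the claimed bound. The accumulated $O(1)$ errors from the $\pi(T) = O(Z^\delta / \log Z)$ primes in play are strictly dominated: the hypothesis $u \le Z^{1/(1+\varepsilon)}$ gives $Z/\varphi(u) \ge Z^{\varepsilon/(1+\varepsilon)}$, while $\delta < \varepsilon/(1+\varepsilon)$ by construction. The main obstacle is precisely this tension between the size of $u$ and the cutoff $T$, and the hypothesis $Z \ge u^{1+\varepsilon}$ is exactly what provides enough room for the truncation; alternatively, the conclusion is a direct corollary of Shiu's estimate as cited in the statement.
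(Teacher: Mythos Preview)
Your proof is correct. The paper itself gives no argument at all: it simply records that the bound is an immediate consequence of Shiu's Brun--Titchmarsh-type theorem for multiplicative functions (applied with $r=\lambda=1$, $x=y$), and points to Nair--Tenenbaum for even more general versions. You instead supply a self-contained elementary argument: expand $\omega(z)=\sum_{p\mid z}1$, swap, and handle the $O(1)$ errors from $N(p)=Z/(up)+O(1)$ by truncating the prime sum at $T=Z^{\delta}$ with $\delta<\varepsilon/(1+\varepsilon)$, bounding the large-prime contribution pointwise by $\log Z/\log T=O_\varepsilon(1)$. The only place the hypotheses are used is exactly where you say: coprimality kills the primes dividing $u$, and $Z\ge u^{1+\varepsilon}$ is what makes $\pi(T)$ negligible against $Z/\varphi(u)$. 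Your route has the advantage of being entirely elementary and transparent about the role of the hypothesis $Z\ge u^{1+\varepsilon}$; the paper's route via Shiu is shorter to state and immediately gives sharper and more general statements (e.g.\ short intervals, other multiplicative functions) should they be needed. Your closing remark that Shiu suffices is precisely the paper's approach.
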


We now need the following simple statement.

\begin{lemma}
\label{lem:sumdiv} Let $\nu \ge 1$ be a fixed integer. 
There exists a polynomial $Q_{\nu}$ of degree $\nu$ such that 
$$
\sum_{1\le z \le Z} \tau(z^{\nu})=ZQ_{\nu}(\log{Z})+O(Z^{1-1/\nu+o(1)}).
$$
\end{lemma}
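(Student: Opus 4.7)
The plan is to extract the asymptotic from the Dirichlet series
\[
F(s) \;=\; \sum_{n \ge 1} \frac{\tau(n^\nu)}{n^s}.
\]
Since $n \mapsto \tau(n^\nu)$ is multiplicative with $\tau(p^{a\nu}) = a\nu+1$, summing the arithmetic progression at each prime yields
\[
F(s) \;=\; \prod_p \frac{1 + (\nu-1)p^{-s}}{(1-p^{-s})^2}.
\]
The first step is to factor $F(s) = \zeta(s)^{\nu+1} G_\nu(s)$, where
\[
G_\nu(s) \;=\; \prod_p \bigl(1 + (\nu-1)p^{-s}\bigr)(1-p^{-s})^{\nu-1}.
\]
Expanding the local factor shows its $p^{-s}$--coefficient vanishes (namely $(\nu-1)-(\nu-1)=0$), so each factor equals $1 + O_\nu(p^{-2s})$; consequently $G_\nu$ is given by an absolutely convergent Euler product on the half-plane $\Re s > 1/2$, is holomorphic there, and $G_\nu(1)$ is a positive constant.

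From here I would exploit the convolution identity $\tau(n^\nu) = \sum_{de=n} g_\nu(d)\,\tau_{\nu+1}(e)$, where $g_\nu$ is the multiplicative function attached to $G_\nu$, together with the classical Dirichlet divisor asymptotic
\[
\sum_{m \le Y} \tau_{\nu+1}(m) \;=\; Y R_\nu(\log Y) \;+\; O\bigl(Y^{1 - 1/\nu + o(1)}\bigr),
\]
with $R_\nu$ an explicit polynomial of degree $\nu$ (this is a slightly weakened form of the known $Y^{1-1/(\nu+1)+o(1)}$ bound, but it suffices and keeps the book-keeping clean). Substituting gives
\[
\sum_{n \le Z} \tau(n^\nu) \;=\; \sum_{d \le Z} g_\nu(d)\,\frac{Z}{d}\,R_\nu\!\bigl(\log(Z/d)\bigr) \;+\; O\!\Biggl(\sum_{d \le Z} |g_\nu(d)|\,(Z/d)^{1-1/\nu+o(1)}\Biggr).
\]

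For the main term, I would expand $R_\nu(\log(Z/d))$ as a polynomial in $\log Z$ and $\log d$; each coefficient of $\log^j Z$ becomes a partial sum $\sum_{d \le Z} g_\nu(d)(\log d)^{\nu-j}/d$ which, when extended to all $d\ge 1$, equals a derivative of $G_\nu$ at $s=1$, and whose truncation tail is $O(Z^{-1/2+\varepsilon})$ by absolute convergence of $G_\nu$ on $\Re s > 1/2$. This assembles the polynomial $Q_\nu$ of degree $\nu$. For the error term, factor out $Z^{1-1/\nu+o(1)}$:
\[
Z^{1-1/\nu+o(1)} \sum_{d \ge 1} \frac{|g_\nu(d)|}{d^{1-1/\nu+o(1)}},
\]
and note that the Dirichlet series on the right converges absolutely whenever $\nu \ge 2$, since then $1-1/\nu \ge 1/2$ places us inside the domain of absolute convergence of $G_\nu$.

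The main technical point is verifying the $p^{-s}$--cancellation that pushes the abscissa of absolute convergence of $G_\nu$ down to $\tfrac{1}{2}$; once the factorization $F = \zeta^{\nu+1} G_\nu$ is in hand, the rest is routine hyperbola-style manipulation and partial summation. The boundary case $\nu = 1$ is genuinely incompatible with the stated error $O(Z^{o(1)})$ (in view of the Dirichlet divisor problem), so the lemma should be read with $\nu \ge 2$, the $\nu = 1$ case being handled separately by directly invoking the classical asymptotic for $\tau(n)$.
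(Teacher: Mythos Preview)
Your approach is essentially identical to the paper's: factor $F(s)=\zeta(s)^{\nu+1}G_\nu(s)$ with $G_\nu$ absolutely convergent for $\Re s>1/2$, convolve $g_\nu$ with $\tau_{\nu+1}$, insert the classical asymptotic for $\sum_{m\le Y}\tau_{\nu+1}(m)$, and handle the main and error terms by extending the $d$-sum to infinity using the squarefull support of $g_\nu$. Your closed-form Euler product $\prod_p (1+(\nu-1)p^{-s})(1-p^{-s})^{-2}$ is a tidier way of seeing the $p^{-s}$-cancellation than the paper's series manipulation, and your observation that the stated error $O(Z^{o(1)})$ is false at $\nu=1$ is correct---the lemma as written should indeed be read for $\nu\ge 2$.
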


\begin{proof} 
The numbers $\tau(z^{\nu})$ are coefficients of the Dirichlet series
$$
F(s)=\prod_{p}\left(1+\frac{\nu+1}{p^s}+\frac{2\nu+1}{p^{2s}}+\ldots \right)=\sum_{z=1}^{\infty}\frac{\tau(z^{\nu})}{z^{s}}.
$$
Note that 
\begin{align*}
F(s)&=\zeta(s)^{\nu+1}\prod_{p}\left(1+\frac{\nu+1}{p^s}+\frac{2\nu+1}{p^{2s}}+\ldots \right)\left(1-\frac{1}{p^s} \right)^{\nu+1} \\
&=\zeta(s)^{\nu+1} \\ & \times\prod_{p}\left(1+\frac{\nu+1}{p^s}+\frac{2\nu+1}{p^{2s}}+\ldots \right)\left(1-\frac{\binom{\nu+1}{1}}{p^s}+\frac{\binom{\nu+1}{2}}{p^{2s}}+\ldots \right) \\ 
&=\zeta(s)^{\nu+1}\prod_{p}\left(1-\frac{\nu(\nu-1)/2}{p^{2s}}+\ldots\right)=\zeta(s)^{\nu+1}F_0(s),
\end{align*}
where $F_0(s)$ is an analytic function represented by a Dirichlet series uniformly convergent in the region $\Re{s}>1/2+\delta$ for any fixed $\delta>0$.

Let the coefficients $a_1,a_2,\ldots$, be defined by
$$F_0(s)=\sum_{n=1}^{\infty}\frac{a_n}{n^s}.$$
We see that the sequence $\{a_n\}_{n=1}^{\infty}$ is supported on the set of squarefull integers and hence
\begin{equation}
\label{eq:anbound}
\sum_{1\le  n \le N}a_n \ll N^{1/2}.
\end{equation}
Let $\tau_{\nu+1}(n)$ denote the coefficients of the Dirichlet series
$$\zeta(s)^{\nu+1}=\sum_{n=1}^{\infty}\frac{\tau_{\nu+1}(n)}{n^s}.$$
By the above we have
$$\tau(z^{\nu})=\sum_{de=z}a_d\tau_{\nu+1}(e),$$
which implies that
$$
\sum_{z=1}^{Z}\tau(n^{\nu})=\sum_{d\le Z}a_d\sum_{e\le Z/d}\tau_{\nu+1}(e).
$$
Since (see, for example,~\cite[Chapter~XII]{Titchmarsh}), for any fixed $\varepsilon > 0$ we have
$$
\sum_{1\le n \le N}\tau_{\nu+1}(n)=NP_{\nu}(\log{N})+O\left(N^{1-1/k+\varepsilon}\right),
$$
where $P_{\nu}$ is a polynomial of degree $\nu$ over $\R$,
we see that
$$
\sum_{z=1}^{Z}\tau(n^{\nu})=Z\sum_{d\le Z}\frac{a_d}{d}P_{\nu}(\log{(Z/d)})+O\left(Z^{1-1/k+\varepsilon}\sum_{d\le Z}\frac{a_d}{d^{1-1/\nu+\varepsilon}} \right),
$$
and since
$$\sum_{d\le Z}\frac{a_d}{d^{1-1/k+\varepsilon}}\le \sum_{d=1}^{\infty}\frac{a_d}{d^{1-1/k+\varepsilon}}=F_0(1-1/\nu+\varepsilon)\ll 1,$$
we get
\begin{equation}
\label{eq:tauv1}
\sum_{z=1}^{Z}\tau(n^{\nu})=Z\sum_{d\le Z}\frac{a_d}{d}P_{\nu}(\log{(Z/d)})+O\left(Z^{1-1/\nu+\varepsilon}\right).
\end{equation}

Since $P_v$ is a polynomial of degree $\nu$, we may write
$$
P_{\nu}(\log{(Z/d)})=\sum_{j=0}^{\nu}\log^{\nu-j}Z\sum_{i\le j}\alpha_{i,j}\log^{j}{d},
$$
for some real numbers $\alpha_{i,j}$. This implies that
\begin{equation}
\label{eq:tauv2}
\sum_{d\le Z}\frac{a_d}{d}P_{\nu}(\log{(Z/d)})=
\sum_{j=0}^{\nu}\log^{\nu-j}{Z}\sum_{i\le j}\alpha_{i,j}\sum_{d\le Z}\frac{a_d\log^{i}{d}}{d}.
\end{equation}
Fixing $0\le i \le \nu$, we write
$$
\sum_{d\le Z}\frac{a_d\log^{i}{d}}{d}=(-1)^{i}F^{(i)}(1)+\sum_{d>Z}\frac{a_d\log^{i}{d}}{d},
$$
and since
$$
\sum_{d>Z}\frac{a_d\log^{i}{d}}{d}\ll \int_{Z}^{\infty}\frac{1}{t^2}\sum_{d\le t}a_d (\log^{i}{d})dt,
$$
we have from~\eqref{eq:anbound}
\begin{equation}
\label{eq:tauv3}
\sum_{d>Z}\frac{a_d\log^{i}{d}}{d}\ll \int_{Z}^{\infty}t^{-(3/2-\varepsilon)}dt\ll Z^{-1/2+\varepsilon}.
\end{equation}
Combining~\eqref{eq:tauv1},~\eqref{eq:tauv2} and~\eqref{eq:tauv3} gives
$$
\sum_{1\le z \le Z} \tau(z^{\nu})=ZQ_{\nu}(Z)+O(Z^{1-1/\nu+\varepsilon}),
$$
for some polynomial $Q_{\nu}$ of degree $\nu$.
\end{proof}

\subsection{Preliminary reductions}
In this section we reduce the problem of bounding $\#\cU^{(m)}_{q}(\vec{X})$ to bounding a simpler system of equations. For a tuple of integers $\vec{X}=\{X_{\vec{e}}\}_{1\le |\vec{e}|\le t}$, 
indexed lexicographically by vectors $\vec{e}$ as in Section~\ref{sec:appl}, 
we let $\cV^{(m)}_{q}(\vec{X})$ denote the set of solutions to the system of equations
$$sy_1^{e_1}\ldots y_{m}^{e_m}\equiv x_{\vec{e}} \mod  q, \quad 1\le |\vec{e}| \le t,$$
in variables $s,y_i,x_{\vec{e}}$ satisfying
\begin{equation}
\label{eq:Vvarcond}
1\le s \le q-1, \quad 1\le y_i \le q-1, \quad  |x_{\vec{e}}| \le X_{\vec{e}}, \quad 1\le |\vec{e}| \le t.
\end{equation}

We first show that we can express the polynomials $\ell^{|\vec{e}|-1}y_1^{e_1}\ldots y_m^{e_m}$ as integer linear combinations of the  $f_{\vec{e}}(y_1,\ldots,y_m)$ with small coefficients.

For each integer vector $\vec{e}_0=(e_{0,1},\ldots,e_{0,m})$ we define the set $\cE(\vec{e}_0)$ by
$$\cE(\vec{e}_0)=\{ \vec{e}=(e_1,\ldots,e_m) \in \Z^m:~ 0\le e_j\le e_{0,j}, \ 1\le j \le m\}.$$ 

\begin{lemma}
\label{lem:linear}
For each $\vec{\vec{e}_0}$ with $1\le |\vec{e}_0|\le t$ there exists coefficients $c_{\vec{e},\vec{e}_0}$  such that 
\begin{equation}
\label{eq:linear}
\ell^{|\vec{e}_0|-1}y_1^{e_{0,1}}\ldots y_m^{e_{0,m}}=\sum_{\vec{e}\in \cE(\vec{e}_0)}c_{\vec{e},\vec{e}_0}f_{\vec{e}}(y_1,\ldots,y_m),
\end{equation}
and
\begin{equation}
\label{eq:bound}
c_{\vec{e}_0,\vec{e}_0}=1, \qquad c_{\vec{e},\vec{e}_0}\ll r_1^{e_{0,1}-e_1}\ldots r_{m}^{e_{0,m}-e_m},
\end{equation}
where the implied constant depends only on $t$ and $m$. 
\end{lemma}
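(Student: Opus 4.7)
The plan is to invert an upper-triangular change of basis relating the $f_{\vec{e}}$'s and the monomials $\ell^{|\vec{e}|-1}y^{\vec{e}}$. First I would expand, via the binomial theorem,
$$
\prod_{i=1}^m(\ell y_i + r_i)^{e_{0,i}} = \sum_{\vec{e}\in \cE(\vec{e}_0)}\left(\prod_{i=1}^m\binom{e_{0,i}}{e_i}r_i^{e_{0,i}-e_i}\right)\ell^{|\vec{e}|}y_1^{e_1}\cdots y_m^{e_m}.
$$
The term corresponding to $\vec{e}=\vec{0}$ equals $\prod_i r_i^{e_{0,i}}$, which cancels with the subtraction in the definition~\eqref{eq:fdef}, so dividing the remaining identity by $\ell$ yields
\begin{equation}
\label{eq:felin-plan}
f_{\vec{e}_0}(y_1,\ldots,y_m)=\sum_{\substack{\vec{e}\in\cE(\vec{e}_0)\\ \vec{e}\neq\vec{0}}}\left(\prod_{i=1}^m\binom{e_{0,i}}{e_i}r_i^{e_{0,i}-e_i}\right)\ell^{|\vec{e}|-1}y_1^{e_1}\cdots y_m^{e_m}.
\end{equation}
Thus, with respect to the componentwise partial order $\vec{e}\preceq\vec{e}_0 \iff e_i\le e_{0,i}$ for all $i$, the change-of-basis matrix from the monomials $\{\ell^{|\vec{e}|-1}y^{\vec{e}}\}$ to the polynomials $\{f_{\vec{e}}\}$ is triangular with unit diagonal and off-diagonal entries of size $O_{t,m}(r_1^{e_{0,1}-e_1}\cdots r_m^{e_{0,m}-e_m})$.

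Next I would establish~\eqref{eq:linear} by induction on $|\vec{e}_0|$. The base case $|\vec{e}_0|=1$ is immediate from~\eqref{eq:felin-plan}, since then $f_{\vec{e}_0}=y_i$ where $i$ is the unique index with $e_{0,i}=1$. For the inductive step, solve~\eqref{eq:felin-plan} for the leading monomial,
$$
\ell^{|\vec{e}_0|-1}y^{\vec{e}_0}=f_{\vec{e}_0}-\sum_{\substack{\vec{e}\in\cE(\vec{e}_0)\\ \vec{e}\neq\vec{0},\vec{e}_0}}\left(\prod_{i=1}^m\binom{e_{0,i}}{e_i}r_i^{e_{0,i}-e_i}\right)\ell^{|\vec{e}|-1}y^{\vec{e}},
$$
and substitute the inductive expansion of each $\ell^{|\vec{e}|-1}y^{\vec{e}}$ with $\vec{e}\prec\vec{e}_0$ as a linear combination of $\{f_{\vec{e}'}\}_{\vec{e}'\in\cE(\vec{e})}$ satisfying~\eqref{eq:bound}. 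Since every $\vec{e}'$ produced this way is $\preceq\vec{e}\prec\vec{e}_0$, the polynomial $f_{\vec{e}_0}$ enters only through the first term, giving $c_{\vec{e}_0,\vec{e}_0}=1$. For $\vec{e}'\prec\vec{e}_0$, the coefficient $c_{\vec{e}',\vec{e}_0}$ is a sum of $O_{t,m}(1)$ products of the shape $\bigl(\prod_i r_i^{e_{0,i}-e_i}\bigr)\bigl(\prod_i r_i^{e_i-e'_i}\bigr)=\prod_i r_i^{e_{0,i}-e'_i}$, which gives~\eqref{eq:bound}.

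The main obstacle is purely bookkeeping: one must check that the powers of $r_i$ telescope correctly across successive layers of inductive substitution so that the bound remains homogeneous of total degree $\sum_i(e_{0,i}-e'_i)$ in the $r_i$. The binomial factors and the $O_{t,m}(1)$ count of nested terms get absorbed into the implied constant, so no delicate cancellations are required beyond those already visible in~\eqref{eq:felin-plan}.
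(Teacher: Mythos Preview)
Your proposal is correct and follows essentially the same approach as the paper's proof: both expand $f_{\vec{e}_0}$ via the binomial theorem to isolate the leading monomial $\ell^{|\vec{e}_0|-1}y^{\vec{e}_0}$, then induct on $|\vec{e}_0|$ by substituting the expansions of the lower-order monomials and telescoping the $r_i$-powers to obtain the bound~\eqref{eq:bound}. Your framing in terms of inverting a unit-triangular change of basis is a nice conceptual gloss, but the actual computation is identical to the paper's.
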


\begin{proof}
We proceed by induction on $|\vec{e}_0|$.

The case $|\vec{e}_0|=1$ is trivial since then there exists some $i$ such that $f_{\vec{e}_0}(y_1,\ldots,y_m)$ has the form
$$f_{\vec{e}_0}(y_1,\ldots,y_m)=y_i.$$

Let $|\vec{e}_0|\ge 2$  and suppose for each $|\vec{e}| \le |\vec{e}_0|-1$ there exists $c_{\vec{e_1},\vec{e}}$ satisfying~\eqref{eq:linear} and~\eqref{eq:bound}. In particular we have
$$
\ell^{|e|-1}y_1^{e_1}\ldots y_{m}^{e_m}=\sum_{\vec{e}_1\in \cE(\vec{e})}c_{\vec{e_1},\vec{e}}f_{\vec{e}_1}(y_1,\ldots,y_m).
$$
Recalling the definition of $f_{\vec{e}_0}(y_1,\ldots,y_m)$ in~\eqref{eq:fdef} have
\begin{align*}
f_{\vec{e}_0}(y_1,\ldots,y_m)&=\frac{\prod_{i=1}^{m}(\ell y_i+r_i)^{e_{0,i}}-\prod_{i=1}^{m}r_i^{e_{0,i}}}{\ell} \\ 
&=\ell^{|\vec{e}_0|-1}y_1^{e_{0,1}}\ldots y_m^{e_{0,m}} \\ & \quad +\sum_{\substack{\vec{e}\in \cE(\vec{e}_0) \\ 0<|\vec{e}|<|\vec{e}_0|}}\left(\prod_{i=1}^{m}\binom{e_{0,i}}{e_i}r_i^{e_{0,i}-e_i}\right)\ell^{|\vec{e}|-1}\prod_{i=1}^{m}y_i^{e_i}.
\end{align*}
By our inductive assumption, for each $1\le |\vec{e}| \le |\vec{e}_0|-1$ we have
$$\ell^{|\vec{e}|-1}\prod_{i=1}^{m}y_i^{e_i}=\sum_{\vec{e}_1\in \cE(\vec{e})}c_{\vec{e_1},\vec{e}}f_{\vec{e}_1}(y_1,\ldots,y_m),$$
with coefficients $c_{\vec{e}_1,\vec{e}}$ satisfying for each $\vec{e}_1\in \cE(\vec{e})$
\begin{equation}
\label{eq:e1bound}
c_{\vec{e},\vec{e}}=1, \quad c_{\vec{e_1},\vec{e}}\ll r_1^{e_{1}-e_{1,1}}\ldots r_{m}^{e_{m}-e_{1,m}},
\end{equation}
where $\vec{e}_1=(e_{1,1},\ldots,e_{1,m})$.
Substituting this into the above gives
\begin{align*}
\ell^{|\vec{e}_0|-1}&y_1^{e_{0,1}}\ldots y_m^{e_{0,m}}\\
&=f_{\vec{e}_0}(y_1,\ldots,y_m) \\ &\qquad -\sum_{\substack{\vec{e}\in \cE(\vec{e}_0) \\ 0<|\vec{e}|<|\vec{e}_0|}}\left(\prod_{i=1}^{m}\binom{e_{0,i}}{e_i}r_i^{e_{0,i}-e_i}\right)\sum_{\vec{e}_1\in \cE(\vec{e})}c_{\vec{e}_1,\vec{e}}f_{\vec{e}_1}(y_1,\ldots,y_m) \\
&=\sum_{\vec{e}_1\in \cE(\vec{e}_0)}c_{\vec{e}_1,\vec{e}_0}f_{\vec{e}_1}(y_1,\ldots,y_m),
\end{align*}
where the coefficients $c_{\vec{e}_1,\vec{e}_0}$ are given by 
$$
c_{\vec{e}_0,\vec{e}_0}=1 \quad \text{and} \quad c_{\vec{e}_1,\vec{e}_0}=-\sum_{\substack{\vec{e} \\ \vec{e}_1 \in \cE(\vec{e})}}c_{\vec{e}_1,\vec{e}}\left(\prod_{i=1}^{m}\binom{e_{0,i}}{e_i}r_i^{e_{0,i}-e_i}\right).
$$
By~\eqref{eq:e1bound} we have 
\begin{align*}
c_{\vec{e}_1,\vec{e}_0}&\ll \sum_{\substack{\vec{e} \\\vec{e_1} \in \cE(\vec{e})}}\prod_{i=1}^{m}r_i^{e_i-e_{1,i}}\prod_{i=1}^{m}\binom{e_{0,i}}{e_i}r_i^{e_{0,i}-e_i} \\
& \ll \sum_{\substack{\vec{e} \\ \vec{e_1}\in \cE(\vec{e})}}\prod_{i=1}^{m}\binom{e_{0,i}}{e_i}r_i^{e_{0,i}-e_{1,i}}\ll \prod_{i=1}^{m}r_i^{e_{0,i}-e_{1,i}},
\end{align*}
which completes the proof.
\end{proof}

We now establish our main technical tool. 

\begin{lemma}
\label{lem:main-red}
With notation as above, suppose that  whenever there exists some $1\le i_0 \le m$ such that if
$$\vec{e}_1=\{e_{1,1},\ldots,e_{1,m}\} \mand  \vec{e}_2=\{e_{2,1},\ldots,e_{2,m}\},$$
 satisfy
 $$e_{2,j}=\begin{cases} e_{1,j}, \quad \quad \quad \quad \  \quad j\neq i_0, \\ e_{2,j}=e_{1,j}+1, 
 \quad j=i_0, \end{cases}
$$
  we have 
\begin{equation}
\label{eq:ParameterInequality}
\frac{X_{\vec{e}_2}}{X_{\vec{e}_1}}\gg r_{i_0}.
\end{equation}
Then there exists a constant $c$ depending only on $t$ and $m$ such that
$$\#\cU^{(m)}_q(\vec{X})\ll \#\cV^{(m)}_q(c\vec{X}).$$
\end{lemma}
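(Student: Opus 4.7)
The plan is to construct an injection $\cU^{(m)}_q(\vec{X}) \hookrightarrow \cV^{(m)}_q(c\vec{X})$ for some constant $c = c(t,m)$, using Lemma~\ref{lem:linear} to trade the polynomials $f_{\vec{e}}$ for pure monomials $y_1^{e_1}\cdots y_m^{e_m}$ after a diagonal rescaling of the variables.

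Starting from a tuple $(s, y_1, \ldots, y_m, \{x_{\vec{e}}\}) \in \cU^{(m)}_q(\vec{X})$, I would set $s' \equiv s\ell^{-1} \mod q$ and $y_i' \equiv \ell y_i \mod q$ (assuming $\gcd(\ell,q)=1$, which is the generic and essentially the only interesting case here). Multiplying the identity~\eqref{eq:linear} by $s$ and reducing modulo $q$, together with $sf_{\vec{e}}(\vec{y})\equiv x_{\vec{e}} \mod q$, yields
$$
s\ell^{|\vec{e}_0|-1} y_1^{e_{0,1}}\cdots y_m^{e_{0,m}}\equiv \sum_{\vec{e}\in \cE(\vec{e}_0)} c_{\vec{e},\vec{e}_0} x_{\vec{e}} \mod q.
$$
The left-hand side equals $s'(y_1')^{e_{0,1}}\cdots(y_m')^{e_{0,m}}$, so setting
$$
x'_{\vec{e}_0}=\sum_{\vec{e}\in \cE(\vec{e}_0)} c_{\vec{e},\vec{e}_0} x_{\vec{e}}
$$
produces exactly the congruences that define $\cV^{(m)}_q$.

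For the size bound $|x'_{\vec{e}_0}|\le cX_{\vec{e}_0}$, I would combine estimate~\eqref{eq:bound}, namely $|c_{\vec{e},\vec{e}_0}|\ll \prod_i r_i^{e_{0,i}-e_i}$, with an iterative application of the hypothesis~\eqref{eq:ParameterInequality} along any componentwise chain from $\vec{e}$ to $\vec{e}_0$. Each step multiplies the $X$-ratio by at least the corresponding $r_i$, so the chain telescopes to $\prod_i r_i^{e_{0,i}-e_i} X_{\vec{e}}\ll X_{\vec{e}_0}$. Summing over the $O_{t,m}(1)$ multi-indices in $\cE(\vec{e}_0)$ then absorbs into~$c$.

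Finally, injectivity of the map $(s, \vec{y}, \{x_{\vec{e}}\})\mapsto (s', \vec{y}', \{x'_{\vec{e}_0}\})$ follows because the rescaling $(s, \vec{y})\mapsto (s', \vec{y}')$ is a bijection on $(\F_q^*)^{m+1}$, and the linear transformation $(x_{\vec{e}})\mapsto (x'_{\vec{e}_0})$ is lower triangular with unit diagonal once the multi-indices are ordered by $|\vec{e}|$ (breaking ties lexicographically), since $c_{\vec{e}_0,\vec{e}_0}=1$ by~\eqref{eq:bound}; hence it is invertible over $\Z$. The main thing to take care with is the telescoping of~\eqref{eq:ParameterInequality} along arbitrary monotone chains in $\cE(\vec{e}_0)$, to confirm that the resulting constant depends only on $t$ and $m$ and not on the specific magnitudes of the $r_i$'s.
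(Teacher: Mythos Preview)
Your proposal is correct and follows essentially the same route as the paper: both invoke Lemma~\ref{lem:linear}, apply the diagonal rescaling $s\mapsto s\ell^{-1}$, $y_i\mapsto \ell y_i$ to pass from $\cU^{(m)}_q$ to $\cV^{(m)}_q$, telescope the hypothesis~\eqref{eq:ParameterInequality} along a monotone chain in $\cE(\vec{e}_0)$ to bound the new right-hand sides, and then conclude injectivity from the unit-triangular structure of the linear map $(x_{\vec e})\mapsto(x'_{\vec e_0})$. The only cosmetic difference is that the paper orders multi-indices purely lexicographically rather than by $|\vec e|$ with lex tiebreak; both are linear extensions of the componentwise order, so either yields the required triangularity.
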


\begin{proof}
We first note that $\cV^{(m)}_q(\vec{X})$ is equal to the set of solutions
 to the system of equations
$$\ell^{|\vec{e}|-1}sy_1^{e_1}\ldots y_m^{e_m}\equiv x_{\vec{e}} \mod  q, \quad 1\le |\vec{e}| \le t,$$
with variables satisfying~\eqref{eq:Vvarcond}.
This can be seen via the change of variables $s\rightarrow \ell^{-1}s$ and $y_i \rightarrow \ell y_i$.

Suppose $$(s,y_1,\ldots,y_m,\{x_{\vec{e}}\}_{1\le |\vec{e}|\le t})\in \cU^{(m)}_q(\vec{X}).$$ 
We recall that $\cU^{(m)}_q(\vec{X})$ denotes the set of solutions to the system of equations
\begin{equation}
\label{eq:Udef1}
sf_{\vec{e}}(y_1,\ldots,y_m)\equiv x_{\vec{e}} \mod  q, \quad 1\le |\vec{e}|\le t,
\end{equation}
with variables satisfying 
\begin{equation}
\label{eq:Uvarcond1}
1\le s \le q-1, \quad 1\le y_i \le q-1, \quad |x_{\vec{e}}|\le X_{\vec{e}}.
\end{equation}
Fixing some $\vec{e}_0$ with $1\le |\vec{e}_0|\le t$, by Lemma~\ref{lem:linear} we have
$$
\ell^{|\vec{e}_0|-1}y_1^{e_{0,1}}\ldots y_m^{e_{0,m}}=\sum_{\vec{e}\in \cE(\vec{e}_0)}c_{\vec{e},\vec{e}_0}f_{\vec{e}}(y_1,\ldots,y_m),
$$
for some constants $c_{\vec{e},\vec{e}_0}$ satisfying for each $\vec{e}\in \cE(\vec{e}_0)$ 
\begin{equation}
\label{eq:cprop}
c_{\vec{e}_0,\vec{e}_0}=1, \quad c_{\vec{e},\vec{e}_0}\ll r_1^{e_{0,1}-e_1}\ldots r_{m}^{e_{0,m}-e_m}.
\end{equation}

Combining the above with equations~\eqref{eq:Udef1} and~\eqref{eq:Uvarcond1} gives
\begin{equation}
\label{eq:UVtrans1}
\ell^{|\vec{e}_0|-1}sy_1^{e_{0,1}}\ldots y_m^{e_{0,m}}\equiv \sum_{\vec{e} \in \cE(\vec{e}_0)}c_{\vec{e},\vec{e}_0}x_{\vec{e}}.
\end{equation}
Let
\begin{equation}
\label{eq:zdef1}
z_{\vec{e}_0}=\sum_{\vec{e} \in \cE(\vec{e}_0)}c_{\vec{e},\vec{e}_0}x_{\vec{e}},
\end{equation}
so that by~\eqref{eq:cprop}
\begin{equation}
\label{eq:zUpperBound}
z_{\vec{e}_0}\ll \sum_{\vec{e} \in \cE(\vec{e}_0)}\prod_{i=1}^{m}r_i^{e_{0,i}-e_{i}}X_{\vec{e}}.
\end{equation}
Fixing some $\vec{e}$ in the above sum and considering the term
$$\prod_{i=1}^{m}r_i^{e_{0,i}-e_{i}}X_{\vec{e}},$$
we may choose a sequence of vectors $\vec{g}_1,\ldots,\vec{g}_R$ such that 
$$\vec{g}_1=\vec{e}, \qquad \vec{g}_R=\vec{e}_0,$$
and for each $1\le j \le R-1$ we have 
$$|\vec{g}_{j+1}-\vec{g}_j|=1.$$
By~\eqref{eq:ParameterInequality} this implies that
$$
\prod_{i=1}^{m}r_i^{e_{0,i}-e_{i}}\le \prod_{j=1}^{R-1}\frac{X_{\vec{g}_{j+1}}} {X_{\vec{g}_j}}
= \frac{X_{\vec{e}_0}}{X_{\vec{e}}},
$$
and hence 
$$\prod_{i=1}^{m}r_i^{e_{0,i}-e_{i}}X_{\vec{e}}\ll X_{\vec{e}_0},$$
which substituted into~\eqref{eq:zUpperBound} gives
\begin{equation}
\label{eq:zUB1}
z_{\vec{e}_0}\ll X_{\vec{e}_0}.
\end{equation}
By~\eqref{eq:UVtrans1}, \eqref{eq:zdef1}, \eqref{eq:zUB1} and the fact that $\vec{e}_0$ is arbitrary 
$$(s,y_1,\ldots,y_m,\{ z_\vec{e}\}_{1\le |\vec{e}|\le t})\in \cV^{(m)}_q(c\vec{X}).$$
Since  the numbers $s$ and $y_1,\ldots,y_m$ uniquely determine each $x_{\vec{e}}$,  
for some choice of integers $\{a_{\vec{e}}\}_{1\le |\vec{e}|\le t}$ we have 
$$\#\cU^{(m)}_q(\vec{X})\le \#\cV^{(m)}_q(c\vec{X})N(\{a_{\vec{e_0}}\}_{1\le |\vec{e}_0| \le t}),$$
where $N(\{a_{\vec{e_0}}\}_{1\le |\vec{e}_0| \le t})$ denotes 
 the number of solutions to the system of equations
\begin{equation}
\label{eq:Na}
\sum_{\vec{e}\in \cE(\vec{e}_0)}c_{\vec{e},\vec{e}_0}x_{\vec{e}}\equiv a_{\vec{e}_0} \mod  q, \quad  |x_{\vec{e}}|\le X_{\vec{e}}, \quad 1\le |\vec{e}_0| \le t.
\end{equation}
We now show that $N(\{a_{\vec{e}_0}\}_{1\le |\vec{e}_0| \le t})\le 1$ and thus the numbers $x_{\vec{e}}$ uniquely  determine the numbers $z_{\vec{e}_0}$ in~\eqref{eq:zdef1}. 

We order the indicies of the variables 
occuring in~\eqref{eq:Na} lexicographically. Considering the matrix $M$ corresponding to the 
equations~\eqref{eq:Na} with this ordering of variables. We see that $M$ has entries equal to $0$ below the main diagonal and by~\eqref{eq:bound}, $M$ has each diagonal entry equal to $1$. This implies that $M$ has nonzero determinant 
and hence the system~\eqref{eq:Na} has at most one solution, so that
$$\#\cU^{(m)}_q(\vec{X})\le \#\cV^{(m)}_q(c\vec{X})$$
which concludes the proof. 
\end{proof}

\section{Proofs of main results} 

\subsection{Proof of Theorem~\ref{thm:main1}}

We consider only the first three congruences from~\eqref{eq:congrf};
\begin{equation}
\label{eq:UV}
\# \cU_{n,q}(\vec{a}, \vec{k}, \vec{h}) \le V_{n,q}(\vec{a}, \vec{k}, \vec{h}),
\end{equation}
where $V_{n,q}(\vec{a}, \vec{k}, \vec{h})$ is the number of $u \in \F_q^*$ 
for which the system of congruences 
\begin{equation}
\label{eq:3congr}
su^{k_i} \equiv y_j \mod  q, \qquad   s \in \F_q^*, \ |y_i|\le h_i, 
\quad j = 1,2,3,
\end{equation}
has a solution.
We now find a non-zero integer solution $\vec{r} = (r_1, r_2, r_3)$ of smallest 
Euclidean norm $\|\vec{r}\|$ to the 
following system of equations
$$
r_1+r_2+r_3 = 0 \mand k_1r_1+k_2r_2+k_3r_3 = 0.
$$
Clearly $\vec{r}$  has an interpretation as the shortest 
vector of a certain 3-dimesional lattice and thus by the  
Minkowski's,  see~\cite[Theorem~5.3.6]{GrLoSch},
 we have $\|\vec{r}\| = O(1)$. 
Furthermore, since $k_1, k_2, k_3$ are pairwise distinct, 
the condition $\vec{r} \ne 0$ immediately implies $r_1r_2r_3\ne 0$. 

Using this vector  $\vec{r}$ we derive that for every 
solution $(s,y_1, y_2, y_3)$ to~\eqref{eq:3congr} we have 
\begin{equation}
\label{eq: prod congr r}
\lambda y_1^{r_1} y_2^{r_2} y_3^{r_3} \equiv 1 \mod  q, \qquad   |y_i|\le h_i, \ 
i =1,2,3,
\end{equation}
with $\lambda  \equiv a_1^{-r_1} a_2^{-r_2} a_3^{-r_3}  \mod  q$.

Since the congruence~\eqref{eq: prod congr r} does not depend on $u$
and when $(y_1, y_2, y_3)$ are fixed there are clearly only $O(1)$ 
corresponding values of $u$ (as~\eqref{eq:3congr} implies 
$u^{k_2-k_1} \equiv y_2/y_1 \mod  q$), we see from~\eqref{eq:UV}
that 
\begin{equation}
\label{eq:UQ}
\# \cU_{n,q}(\vec{a}, \vec{k}, \vec{h}) \ll J,
\end{equation}
where $J$ is the number of solutions to the 
congruence~\eqref{eq: prod congr r}.

Using the orthogonality of characters, we can express $W$ via
the following sum
\begin{align*}
J &= \sum_{|y_1|\le h_1}\sum_{|y_2|\le h_2} \sum_{|y_3|\le h_3}
\frac{1}{q-1}\sum_{\chi \in \cX_q} \chi\(\lambda  y_1^{r_1} y_2^{r_2} y_3^{r_3}\) \\
& = 
\frac{1}{q-1}\sum_{\chi \in \cX_q} \chi\(\lambda\) \prod_{i=1}^3 \sum_{|y_i|\le h_i} \chi^{r_i}\(y_i\).
\end{align*}
By the above properties of $\vec{r}$ we see that there are at most 
$|r_1| + |r_2| + |r_3| = O(1)$ characters $\chi \in \cX_q$ for which $\chi^{r_i} = \chi_0$ 
for some $i =1,2,3$. We estimate the above sums for all such characters trivially,
so denoting by $\Psi$ the set of remaining characters we obtain  
\begin{equation}
\label{eq:W Psi}
J \ll \frac{h_1h_2h_3}{q}+  \frac{1}{q-1}\sum_{\chi \in \Psi}
 \prod_{i=1}^3  \left| \sum_{|y_i|\le h_i} \chi^{r_i}\(y_i\)\right|.
\end{equation}

By H{\"o}lder's inequality
\begin{equation}
\label{eq:SST}
\sum_{\chi \in \Psi} \prod_{i=1} \left| \sum_{|y_i|\le h_i} \chi^{r_i}\(y_i\)\right| \le  S_1^{1/4}S_2^{1/4}T^{1/2}, 
\end{equation}
where 
$$
S_i =\sum_{\chi \in \Psi}\left|\sum_{|y|\le  h_i}\chi^{r_i}\(y\) \right|^4, 
\  i =1,2, \quad \text{and} \quad 
T=\sum_{\chi \in \Psi}\left|\sum_{|y|\le  h_3}\chi^{r_3}\(y\) \right|^2. 
$$

Cleary, when $\chi$ runs through the set $\Psi$, the character 
$\chi^{r_i}$ runs through some subset of $\cX_q^*$ repeating 
each character at most $r_i$ times. Hence,  applying Lemma~\ref{lem:CharSum4}, we derive 
\begin{equation}
\label{eq:Bound Si}
S_i \ll \sum_{\chi \in \cX_q^*}\left|\sum_{|y|\le  h_i}\chi \(y\) \right|^4
\le q h_i^2 (\log q)^2, 
\qquad  i =1,2.
\end{equation}
A similar argument, combined with the orthogonality 
of characters implies
\begin{equation}
\label{eq:Bound T}
T \ll \sum_{\chi \in \cX_q^*}\left|\sum_{|y|\le  h_3}\chi \(y\) \right|^2
\le  \sum_{\chi \in \cX_q}\left|\sum_{|y|\le  h_3}\chi\(y\) \right|^2 = qh_3.
\end{equation}
Substituting~\eqref{eq:Bound Si} and~\eqref{eq:Bound T} in~\eqref{eq:SST}
and recalling~\eqref{eq:UQ} and~\eqref{eq:W Psi},  we obtain 
\begin{equation}
\label{eq:fin}
\# \cU_{n,q}(\vec{a}, \vec{k}, \vec{h})\ll 
\frac{h_1h_2h_3}{q}+(h_1h_2h_3)^{1/2}\log q.
\end{equation}
Clearly, this bound is nontrivial only if $h_1h_2h_3 \le q^2$, in which 
case the second term on the right hand side of~\eqref{eq:fin}
always dominates and the  desired result follows. 

\subsection{Proof of Theorem~\ref{thm:main12}}

The proof is essentially identical to that of Theorem~\ref{thm:main1}.
We only note that in this case we have $k_1=1, k_2=2, k_3=3$, so we can use
$r_1=1, r_2=-2, r_3=1$.  Hence our equations become
\begin{equation}
\label{eq:congy123}
y_1 y_3 \equiv y_2^{2} \mod  q, \qquad 0 < |y_i| < h_i, \ i =1,2,3.
\end{equation}
Clearly $|y_1 y_3 - y_2^{2}| \le H$ where $H = h_1h_3 + h_2^2$.
Thus~\eqref{eq:congy123} implies that $y_1 y_3 = y_2^{2} + zq$ 
for some integer $z$ with $|z|\le H/q$. Hence $z$ can take at most $O(H/q+1)$
possible values and then for each fixed $y_2$, in $O(h_2)$ possible ways, 
we have from~\eqref{eq:tau} that $y_1$ and $y_3$ can take at most  $\exp\left({O\left(\frac{\log{q}}{\log\log{q}}\right)}\right)$ 
possible values.

\subsection{Proof of Theorem~\ref{thm:main2}}

Recalling the proof of Theorem~\ref{thm:main1}, 
in particular~\eqref{eq: prod congr r}, we see that it is enough to estimate 
$W_{n,q}(\vec{a}, \vec{k}, \vec{h})$ on average over $q \in \cQ$.
We also note that $\lambda$ in~\eqref{eq: prod congr r} is a
rational number with the numerator  and denominator of size $A^{O(1)}$.
 
Changing the order of summation, we obtain
\begin{align*}
\sum_{q\in \cQ}\# \cU_{n,q}(\vec{a}, \vec{k}, \vec{h}))&\ll \sum_{q\in \cQ}
\sssum_{\substack{{|y_1|\le h_1,\, |y_2|\le h_2,\, |y_3|\le h_3}
\\ \lambda y_1^{r_1} y_2^{r_2} y_3^{r_3} \equiv 1 \mod  q}}1\\
& =\sssum_{|y_1|\le h_1, \, |y_2|\le h_2,\, |y_3|\le h_3}
\sum_{\substack{q\in \cQ\\ \lambda y_1^{r_1} y_2^{r_2} y_3^{r_3} \equiv 1 \mod  q}}1\\
& = \Sigma_1 + \Sigma_2, 
\end{align*}
where $\Sigma_1$ is the contribution from terms with $\lambda y_1^{r_1} y_2^{r_2} y_3^{r_3} = 1$
and $\Sigma_2$ is the contribution from all other terms.

Considering $\Sigma_1$, for a vector $\vec{y} = (y_1,y_2,y_3)$ with $\lambda y_1^{r_1} y_2^{r_2} y_3^{r_3} = 1$
we estimate the inner sum trivially as $\# \cQ$. 
It is easy to see that the above equation is equivalent to  a relation 
of the type 
\begin{equation}
\label{eq:z123}
a z_1^{s_1}z_2^{s_2} = b z_3^{s_3}, \qquad 0 < |z_i| < J_i, \ i =1,2,3,
\end{equation}
with some relatively prime integers $a,b = A^{O(1)}$,
where $(s_1, s_2, s_3)$ and $(J_1,J_2,J_3)$ are 
permutations of  $(|r_1|, |r_2|, |r_3|)$ and $(h_1,h_2,h_3)$,
respectively. Thus, fixing $z_3$ in $O(J_3) = O(h_3)$ 
possible ways, we see that $z_1^{s_1}$ and $z_2^{s_2}$ run through 
the divisors of $|b z_3^{s_3}|$. 
It is also clear that we can assume that 
$$
J_3 \le J_1^{s_1}J_2^{s_2} = J_2^{O(1)},
$$
in the equation~\eqref{eq:z123}.
We see that the total contribution 
from all such terms in $\Sigma_1$ can be bounded by
$$
\Sigma_1 \ll \#\cQ\sum_{1\le z_3 \le h_3}\tau(bz_3^{s_3})\ll \#\cQ\tau(b)\sum_{1\le z_3\le h_3}\tau(z_3^{s_3}),
$$
so that combining~\eqref{eq:tau}, the bound $b=A^{O(1)}$ and Lemma~\ref{lem:sumdiv} gives
\begin{equation}
\label{eq:Sigma1}
\Sigma_1 \ll \#\cQ h_3\exp\(O\(\frac{\log A}{\log\log A}  + \log \log h_3\)\).
\end{equation}

To estimate $\Sigma_2$ we note that if  $\lambda y_1^{r_1} y_2^{r_2} y_3^{r_3} \equiv 1 \mod  q$
then $q$ divides the numerator of $\lambda y_1^{r_1} y_2^{r_2} y_3^{r_3} - 1$, 
which is a non-zero integer of size $h_3^{O(1)}$ so has at most 
$O\(\log h_3/\log \log h_3\)$ prime divisors. 
The total contribution 
from all such terms can be bounded as 
\begin{equation}
\label{eq:Sigma2}
\Sigma_2 \ll  h_1h_3h_3 \log h_3/\log \log h_3.
\end{equation}
Combining~\eqref{eq:Sigma1} and~\eqref{eq:Sigma2},
we obtain the result.

\subsection{Proof of Theorem~\ref{thm:main3}}

Arguing as in the proof of Theorem~\ref{thm:main2},
 we have
\begin{align*}
\sum_{q\in \cQ}\#U_q(\mathbf{h})&\ll \sum_{q\in \cQ}\sum_{1 \le |y_1|\le h_1}\sum_{1\le |y_2|\le h_2}\sum_{1 \le |y_3|\le h_3}\sum_{\substack{q\in \cQ \\ y_1 y_3\equiv y^2_2 \mod  q}}1 \\
&=\Sigma_1+\Sigma_2,
\end{align*}
where $\Sigma_1$ is the contribution from terms $y_1,y_2,y_3$ with $y_1y_3=y_2^2$ and $\Sigma_2$ the contribution from the remaining terms. Considering $\Sigma_1$, for each value of $y_2$ there are at most $\tau(y_2^2)$ values of $y_1,y_3$ and 
hence by Lemma~\ref{lem:sumdiv}
$$\Sigma_1\ll \# \cQ \sum_{y_2\le h_2}\tau(y_2^2)\ll  \# \cQ  h_2\log^2{h_2}.$$

We now consider $\Sigma_2$. Without loss of generality, we can assume that $h_1\ge h_3$. 
Then we write 
$$
\Sigma_2 \le  \sum_{1\le |y_3|\le h_3} \sum_{1\le  |y_2|\le h_2} 
\sum_{1\le |y_1|\le h_1} \omega(|y_1 y_3 -  y^2_2|).
$$
We set $D = \min\{|h_3|, |h_2|^2\}$ and for each positive integer $d \le D$ we group together 
pairs $(y_2, y_3)$ with $\gcd(y_2^2, y_3) = d$ in a set $\cY_d$.

Now for each pair  $(y_2, y_3) \in \cY_d$ we 
estimate the inner sums via Lemma~\ref{lem:tau AP} with $Z =  |y_3| h_1/d$,  $u= |y_3|/d$
(thus $M\ge m^2$ and we can take $\varepsilon =1$) and $v = |y_2|^2/d$, and where $y_1y_3/d$ plays the
role of $z$. Using the subadditivity of the prime divisor function,  we obtain
\begin{align*}
\sum_{1\le |y_1|\le h_1} \omega(|y_1 y_3 -  y^2_2|) & \le \omega (d) h_1 
+ \sum_{1\le |y_1|\le h_1} \omega(|y_1 y_3/d -  y^2_2/d|)\\
& \ll \omega (d) h_1 + h_1 \frac{|y_3|}{d \varphi( |y_3|/d)} \log \log h_1.
\end{align*}

If  $(y_2, y_3) \in \cY_d$ then $y_2$ belongs to a set of at most $h_2 d^{-1/2}$ integers. 
Hence, writing $y_3 = \pm  w d$ and the extending the summation over all  positive integers $w  \le h_3/d$ we obtain   
\begin{align*}
\Sigma_2 & \ll  h_1 h_2  h_3 
\sum_{1\le d \le D} \omega (d)  d^{-3/2} \\
& \qquad \qquad  \quad +
h_1 h_2 \sum_{1\le d \le D} d^{-1/2}  \sum_{1 \le |w| \le h_3/d} 
 \frac{w}{\varphi(w)} \log \log h_1. 
\end{align*}
Using Lemma~\ref{lem:sumphi} we easily obtain 
$$
\Sigma_2 \ll  h_1 h_2  h_3   \log \log h_1, 
$$
and conclude the proof. 
%

\subsection{Proof of Theorem~\ref{thm:approxgcd}}
By assumption our parameters satisfy the conditions of Lemma~\ref{lem:main-red}, hence it is sufficient to prove the corresponding bound for $\#\cV^{(m)}(\vec{X})$. Fixing a solution to the equation
$$sy_1^{i}\equiv x_{1,i} \mod  q,$$
in variables $s,y_1,x_{1,1},\ldots,x_{1,t}$ there are at most 
$$\left(\frac{qH}{\ell}\right)^{m-1},$$
solutions in remaining variables. Hence with notation as in Theorem~\ref{thm:main12} we have
$$\#\cU_q^{(m)}(\vec{X})\ll \left(\frac{qH}{\ell}\right)^{m-1}\#\cU_{t,q}(\mathbf{h}),$$
where 
$$h_i=\frac{qH^{i}}{\ell}.$$
This implies that 
$$
\#\cU_q^{(m)}(\vec{X})\ll \left(\frac{q^2H^6}{\ell^3}+\frac{qH^2}{\ell}\right)\left(\frac{qH}{\ell}\right)^{m-1},
$$
and concludes the proof.

\subsection{Proof of Theorem~\ref{thm:approxgcdav}}
As in the proof of Theorem~\ref{thm:approxgcd} we have
$$
\frac{1}{\#\cQ}\sum_{q\in \cQ}\#\cU_q^{(m)}(\vec{X})\ll \left(\frac{QH}{\ell}\right)^{m-1}\frac{1}{\#\cQ}\sum_{q\in \cQ}\#\cU_{t,q}(\mathbf{h}),
$$
hence by Theorem~\ref{thm:main3}
\begin{align*}
\frac{1}{\#\cQ}\sum_{q\in \cQ}&\#\cU_q^{(m)}(\vec{X})\\
&\ll \left(\frac{QH}{\ell}\right)^{m-1}\left( 
\frac{QH^2}{\ell}\log^2{Q} 
+\frac{1}{\#\cQ}\frac{Q^3H^6}{\ell^3}\frac{\log Q}{\log\log Q}\right),
\end{align*}
and the result follows. 

\section*{Acknowledgement}

The authors are grateful to Nadia Heninger and Damien Stehl{\'e} for introducing them to this 
problem.

During the preparation of this work, B.~Kerr was supported by the ARC Grants DE220100859 and DP230100534  
and  I.~E.~Shparlinski   by the ARC Grant DP230100530 and  DP230100534

\end{document}